\newcommand{\hR}{\hat{\RR}}
\def\RR{{\mathrm R}}
\def \2R{{\hat{\RR}}}
\def\WW{{\mathrm W}}
\def\BB{{\mathrm B}}
\def\EE{{\mathrm E}}
\def\TT{{\mathrm{T}}}
\def\Rc{{\mathrm {Rc}}}
\def\SS{{\mathrm S}}
\def\diag{\mathrm{Diag}}
\def\id{\mathrm{Id}}
\newtheorem{theorem}{Theorem}[section]
\newtheorem{lemma}[theorem]{Lemma}
\newtheorem{proposition}[theorem]{Proposition}
\newtheorem{conjecture}[theorem]{Conjecture}
\theoremstyle{definition}
\newtheorem{definition}[theorem]{Definition}
\theoremstyle{remark}
\numberwithin{equation}{section}
\title[]{Curvature of the Second kind \\
and a conjecture of Nishikawa }
\author{Xiaodong Cao}
\address{Department of Mathematics\\
         Cornell University \\
         Ithaca, NY  14853}
\author{Matthew J. Gursky}
\address{Department of Mathematics \\
         University of Notre Dame\\
         Notre Dame, IN 46556}
\author{Hung Tran}
\address{Department of Mathematics and Statistics\\
         Texas Tech University \\
         Lubbock, TX  79409}
         \date{\today}
\begin{document}

\begin{abstract} In this paper we investigate manifolds for which the curvature of the second kind (following the terminology of Nishikawa in \cite{nishi86positive}) satisfies certain positivity conditions. Our main result settles Nishikawa's conjecture that manifolds for which the curvature (operator) of the second kind are positive are  diffeomorphic to a sphere, by showing that such manifolds satisfy Brendle's \text{PIC1} condition.   In dimension four we show that curvature of the second kind has a canonical normal form, and use this to classify Einstein four-manifolds for which the curvature (operator) of the second kind is five-non-negative.  We also calculate the normal form for some explicit examples in order to show that this assumption is sharp.  
\end{abstract}

\maketitle

%%%%%%%%%%%%
\section{Introduction}  \label{Intro}

Let $V$ be a an $n$-dimensional (real) inner product space, and let $\RR : \otimes^4 V  \rightarrow \mathbb{R}$ be an algebraic curvature tensor.  If $\TT^2(V)$ denotes the space of bilinear forms on $V$, then we have the splitting
\begin{align*}
\TT^2(V) = S^2(V) \oplus \Lambda^2(V),
\end{align*}
where $S^2$ is the space of symmetric two-tensors and $\Lambda^2$ is the space of two-forms.  By the symmetries of $\RR$, there are (up to sign)  two ways that $\RR$ can induce a linear map $\RR : \TT^2(V) \rightarrow \TT^2(V).$  The classical example is $R : \Lambda^2(V) \rightarrow \Lambda^2(V)$, defined by
\begin{align} \label{first}
R(e^i \wedge e^j) = \frac{1}{2} \sum_{k, \ell} R_{ijk \ell} \, e^k \wedge e^{\ell},
\end{align}
where $\{ e^1,\dots,e^n\}$ is an orthonormal basis of $V^{*}$.  When $\RR$ is the curvature tensor of a Riemannian metric, then the map (\ref{first}) is called the curvature operator.

The second map is $\hR : S^2(V) \rightarrow S^2(V)$, defined by
\begin{align} \label{second}
\hR( e^i \odot e^j ) = \sum_{k, \ell} R_{i k \ell j} \, e^k \odot e^{\ell},
\end{align}
where $\odot$ is the symmetric product (see Section \ref{Sec2} for definitions and conventions).  Note that $S^2(V)$ is not irreducible under the action of the orthogonal group on $V$.  If we let $S^2_0(V)$ denote the space of trace-free symmetric two-tensors, then $S^2(V)$ splits as
\begin{align*}
 S^2(V)=S^2_0(V)\oplus \text{Id}.
 \end{align*}
 $\hR$ induces a bilinear form $\hR : S^2_0(TM) \times S^2_0(TM) \rightarrow \mathbb{R}$ by restriction to $S^2_0(V)$.  When $\RR$ is the curvature tensor of a Riemannian metric, S. Nishikawa called $\hR$ the {\em curvature operator of the second kind}, to distinguish it from the map $\RR$ in (\ref{first}), which he called the {\em curvature operator of the first kind} (see  \cite{nishi86positive} and also \cite{bk78}).

The curvature operator of the second kind naturally arises as the term in Lichnerowicz Laplacian involving the curvature tensor, see \cite{MRS20}. As such, its sign plays a crucial role in rigidity questions for Einstein metrics.   We say that $\hat{\RR} > 0$ (respectively, $\hat{\RR} \geq 0$) if the eigenvalues of $\hat{\RR}$ as a bilinear form on $S^2_0(V)$ are positive (respectively non-negative).  It is easy to see that if $\hat{\RR} > 0$ (resp. $\geq 0$), then the sectional curvature is positive (resp., non-negative).

Nishikawa proposed the following conjecture (\cite{nishi86positive}):

\begin{conjecture}
	\label{sphere2ndkind}
	Let $(M,g)$ be a closed, simply connected Riemannian manifold. If $\hat{\RR}\geq 0$ then $M$ is diffeomorphic to a Riemannian locally symmetric space. If the inequality is strict, then $M$ is diffeomorphic to a round sphere.
\end{conjecture}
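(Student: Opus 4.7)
The strategy I would adopt is the one signaled in the abstract: prove the strict part of the conjecture by establishing the purely algebraic pointwise implication $\hat{\RR} > 0 \Rightarrow \mathrm{PIC1}$, and then invoke Brendle's differentiable sphere theorem, which guarantees that a closed simply connected Riemannian manifold satisfying PIC1 is diffeomorphic to $S^n$. In this approach all of the geometric analysis (Ricci flow with surgery, pinching estimates, maximum principles) is outsourced to Brendle, and the entire problem collapses to an algebraic inequality on abstract curvature tensors.

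To carry out the algebraic step, I would test the bilinear form $\hat{\RR}$ against a well-chosen one-parameter family of trace-free symmetric $2$-tensors supported on an arbitrary orthonormal $4$-frame $\{e_1,e_2,e_3,e_4\}$. Recall that PIC1 amounts to the pointwise inequality
\begin{align*}
R_{1313} + \lambda^2 R_{1414} + R_{2323} + \lambda^2 R_{2424} - 2\lambda R_{1234} \geq 0
\end{align*}
for every such frame and every $\lambda \in [0,1]$. A natural ansatz is a linear combination
\begin{align*}
h_\lambda = \alpha\,(e^1 \odot e^3) + \beta\,(e^2 \odot e^4) + \gamma\,(e^1 \odot e^4) + \delta\,(e^2 \odot e^3)
\end{align*}
of the four off-diagonal symmetric products in the $4$-plane, which is automatically trace-free. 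Expanding $\hat{\RR}(h_\lambda,h_\lambda)$ produces the four diagonal sectional-curvature contributions $R_{1313},R_{1414},R_{2323},R_{2424}$ along with cross terms involving $R_{ik\ell j}$; the first Bianchi identity $R_{1234}+R_{1342}+R_{1423}=0$ collapses those cross terms into a single multiple of $R_{1234}$. With an appropriate $\lambda$-dependent choice of $(\alpha,\beta,\gamma,\delta)$ I expect $\hat{\RR}(h_\lambda,h_\lambda)$ to equal, up to a positive factor, the PIC1 expression, so that positivity of $\hat{\RR}$ on $S^2_0(V)$ forces PIC1.

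For the non-strict rigidity statement in the conjecture I would run a Hamilton-style Ricci flow plus strong maximum principle argument: first verify, using the same algebraic input as above, that the ODE associated with the flow preserves $\hat{\RR} \geq 0$, and then invoke the strong maximum principle to conclude that either $\hat{\RR}>0$ strictly for positive times (reducing to the first case) or the zero eigenspace of $\hat{\RR}$ is parallel, yielding a holonomy reduction and ultimately the locally symmetric structure. The principal obstacle in this plan is the first, algebraic, step: identifying the PIC1 combination as $\hat{\RR}(h_\lambda,h_\lambda)$ for the right $h_\lambda$ requires keeping track of signs, the exact normalization of $\odot$, and the coefficient of $R_{1234}$ through a careful application of the first Bianchi identity. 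Everything downstream is either a black-box invocation of Brendle's convergence theorem or a routine maximum-principle argument along the Ricci flow.
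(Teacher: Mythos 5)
Your high-level strategy is exactly the paper's: establish the pointwise algebraic implication $\hat{\RR}>0 \Rightarrow \text{PIC1}$ (in fact, the non-strict version with $\hat{\RR}\geq 0$ giving NIC1), then invoke Brendle's convergence theorem to conclude $M$ is diffeomorphic to a spherical space form, hence to $S^n$ under the simple connectedness assumption. The place where your proposal has a genuine gap is the algebraic step: you cannot realize the PIC1 quantity as $\hat{\RR}(h_\lambda, h_\lambda)$ for a \emph{single} trace-free $h_\lambda = \alpha\, e^1\odot e^3 + \beta\, e^2\odot e^4 + \gamma\, e^1\odot e^4 + \delta\, e^2 \odot e^3$. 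To get the coefficients of $\RR_{1313}, \RR_{2323}$ equal to $1$ and those of $\RR_{1414}, \RR_{2424}$ equal to $\lambda^2$, you are forced to take $\alpha, \delta = \pm 1$ and $\beta, \gamma = \pm\lambda$. But then expanding $\hat{\RR}(h_\lambda, h_\lambda)$ produces ``mixed'' cross terms with coefficient $\alpha\delta$, $\alpha\gamma$, $\beta\delta$, $\beta\gamma$ involving curvature components such as $\RR_{1323}$, $\RR_{1314}$, $\RR_{2324}$, $\RR_{1424}$. These do not appear in the PIC1 expression, cannot be absorbed via the first Bianchi identity (which only relates $\RR_{1234}$, $\RR_{1342}$, $\RR_{1423}$), and cannot be made to vanish since the required $\alpha\delta=\pm 1$ and $\beta\gamma=\pm\lambda^2$ are nonzero.

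The resolution — which is what the paper actually does — is to split the ansatz into two \emph{orthogonal} test tensors,
\begin{align*}
h_1 &= e^1\odot e^3 + \lambda\, e^2 \odot e^4, \qquad h_2 = e^2\odot e^3 - \lambda\, e^1 \odot e^4,
\end{align*}
so that each of $\hat{\RR}(h_1,h_1)$ and $\hat{\RR}(h_2,h_2)$ contains only the ``good'' diagonal and Bianchi-reducible cross terms, and the contaminating components such as $\RR_{1323}$ never arise. Under your hypothesis $\hat{\RR}>0$ each term is separately positive, so the sum is positive, and after applying the Bianchi identity one gets $\RR_{1313} + \RR_{2323} + \lambda^2(\RR_{1414}+\RR_{2424}) > 6\lambda\,\RR_{1234}$; repeating with $(e^1,e^2)$ interchanged controls the opposite sign and yields PIC1. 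A bonus of this decomposition, exploited in the paper, is that since $h_1\perp h_2$ it suffices to assume $\hat{\RR}$ is merely two-positive. Finally, note the paper does not pursue your Ricci-flow/strong-maximum-principle program for the non-strict rigidity half of the conjecture; it proves only the positive case in general dimension, and addresses the borderline case only for four-dimensional Einstein metrics via the normal form of Theorem \ref{Main4}.
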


This can be viewed as a differentiable sphere conjecture for curvature of the second kind.  In dimension three, it is easy to check that $\hat{\RR}\geq 0$ implies $\Rc\geq \frac{\SS}{6}$, where $\Rc$ is the Ricci tensor and $\SS$ is the scalar curvature.  In particular the positive case of the conjecture follows from the work of Hamilton \cite{H3}.  In all dimensions, if $\hat{\RR}>0$ then $M$ is a real homology sphere \cite{OT79homology}.   Also, if one imposes additional conditions on the metric (for example, harmonic curvature), then the conjecture is true (see \cite{kashiwada93}).

Our first result is that the positive case of Nishikawa's Conjecture is true -- in fact, the assumption can be weakened:

\begin{theorem} \label{Main1} Let $(M,g)$ be a closed Riemannian manifold such that $\hat{\RR}$ is two-positive (i.e., the sum of the smallest two eigenvalues of $\hat{\RR}$ is positive).  Then $M$ is diffeomorphic to a spherical space form.
\end{theorem}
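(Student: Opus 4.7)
The plan is to reduce Theorem \ref{Main1} to a purely algebraic assertion: two-positivity of $\hat{\RR}$ implies Brendle's PIC1 condition (positive isotropic curvature on $M\times\mathbb{R}$) at every point of $M$. Once that is in hand, the conclusion is immediate from Brendle's convergence theorem for the Ricci flow: a closed Riemannian manifold whose curvature tensor is pointwise strictly PIC1 is diffeomorphic to a spherical space form.

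Pointwise PIC1 says that for every orthonormal $4$-frame $\{e_1,e_2,e_3,e_4\}\subset T_pM$ and every $\lambda\in[0,1]$,
\begin{align*}
R_{1313}+\lambda^{2}R_{1414}+R_{2323}+\lambda^{2}R_{2424}-2\lambda R_{1234}\;>\;0.
\end{align*}
Two-positivity of $\hat{\RR}$, by the standard min-max characterization of sums of eigenvalues, is equivalent to $\hat{\RR}(h_1,h_1)+\hat{\RR}(h_2,h_2)>0$ for every orthonormal pair $h_1,h_2\in S^{2}_{0}(T_pM)$. The whole proof therefore reduces to exhibiting two orthogonal trace-free symmetric $2$-tensors of equal norm whose combined $\hat{\RR}$-quadratic form is a positive multiple of the PIC1 expression above.

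Motivated by the antisymmetric PIC witnesses $e_1\wedge e_3+\lambda\, e_2\wedge e_4$ and $\lambda\, e_1\wedge e_4-e_2\wedge e_3$ (which realize the analogous identity for the curvature operator of the first kind), I would try the symmetrized pair
\begin{align*}
h_1 \;=\; e_1\odot e_3 + \lambda\, e_2\odot e_4, \qquad h_2 \;=\; \lambda\, e_1\odot e_4 - e_2\odot e_3.
\end{align*}
These are trace-free (every summand is off-diagonal), orthogonal in $S^{2}(V)$ (no unordered index pair $\{i,j\}$ appears in both), and of equal norm. Expanding $\hat{\RR}(h_i,h_i)$ using the defining formula \eqref{second}, the four ``diagonal'' contributions produce the sectional-curvature part $R_{1313}+\lambda^{2}R_{2424}+\lambda^{2}R_{1414}+R_{2323}$, while the cross-terms of shape $(e_1\odot e_3,e_2\odot e_4)$ and $(e_1\odot e_4,e_2\odot e_3)$ yield a linear combination of $R_{1234}$, $R_{1342}$ and $R_{1423}$ that should collapse to a single $-2\lambda R_{1234}$ after applying the first Bianchi identity $R_{1234}+R_{1342}+R_{1423}=0$ together with the pair- and skew-symmetries of $\RR$.

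The principal obstacle is precisely this last computation: tracking signs and combinatorial factors in the cross-term expansion and verifying that Bianchi produces exactly the coefficient $-2\lambda$ on $R_{1234}$. If the signs come out wrong, the choice of $h_2$ admits easy adjustments — a global sign flip, or the relabeling $e_3\leftrightarrow e_4$ — and the PIC1 inequality is invariant under such changes. Once the algebraic identity is confirmed, two-positivity of $\hat{\RR}$ yields strict pointwise PIC1 on all of $M$, and Brendle's theorem completes the proof of Theorem \ref{Main1}.
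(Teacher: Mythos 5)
Your approach matches the paper's: reduce to Brendle's PIC1 condition by testing two-positivity of $\hat{\RR}$ on an orthogonal pair $h_1,h_2\in S^2_0$, then invoke Brendle's convergence theorem. Your test tensors $h_1=e^1\odot e^3+\lambda\,e^2\odot e^4$ and $h_2=\lambda\,e^1\odot e^4-e^2\odot e^3$ are, up to the harmless overall sign on $h_2$, exactly the ones the paper uses.

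One substantive correction to the step you flag as the principal obstacle: carrying out the expansion, the cross-terms in $\hat{\RR}(h_1,h_1)+\hat{\RR}(h_2,h_2)$ collapse under the first Bianchi identity to $-6\lambda\RR_{1234}$, not $-2\lambda\RR_{1234}$. Because of this factor of $6$, the single resulting inequality
\[
\RR_{1313}+\RR_{2323}+\lambda^2\bigl(\RR_{1414}+\RR_{2424}\bigr)-6\lambda\RR_{1234}>0
\]
does not by itself imply PIC1: when $\RR_{1234}<0$ it is strictly weaker than the PIC1 inequality. So the relabeling you mention as a contingency is in fact an essential second step, needed to control the \emph{magnitude} of the $\RR_{1234}$ coefficient and not merely its sign. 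Concretely, repeating the computation with $e^1$ and $e^2$ interchanged (equivalently, with $h_3=e^2\odot e^3+\lambda\,e^1\odot e^4$ and $h_4=e^1\odot e^3-\lambda\,e^2\odot e^4$) yields the same bound with $+6\lambda\RR_{1234}$, and combining the two gives
\[
\RR_{1313}+\RR_{2323}+\lambda^2\bigl(\RR_{1414}+\RR_{2424}\bigr)>6\lambda\,|\RR_{1234}|\geq 2\lambda\,|\RR_{1234}|,
\]
which is PIC1. This is precisely how the paper's proof of Theorem \ref{Main2a} proceeds; with that adjustment your argument is correct and matches it.
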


To explain the idea of the proof of Theorem \ref{Main1}, it will be helpful to recall a definition due to S. Brendle \cite{BrendleDuke}:

\begin{definition}
	\label{defPIC1}  $(M,g)$ satisfies the PIC1 condition if for any orthonormal frame $\{e_1, e_2, e_3, e_4\}$ we have
\begin{align} \label{PIC1}
\RR_{1313}+\lambda^{2}\RR_{1414}+\RR_{2323}+\lambda^{2}\RR_{2424}-2\lambda\RR_{1234} > 0 \text{ for all } \lambda\in [0,1].
\end{align}	
If the quantity in (\ref{PIC1}) is non-negative for any orthonormal frame, then we say that $(M,g)$ satisfies the NIC1 condition.
\end{definition}

PIC1 is equivalent to the condition that the product manifold $(M \times \mathbb{R}, g + ds^2)$ has positive isotropic curvature (PIC); see Proposition 4 of \cite{BrendleDuke}.  Brendle showed that if $(M,g)$ satisfies the PIC1 condition, then the Ricci flow with initial metric $g$ exists for all time and converges to a
constant curvature metric as $t \rightarrow \infty$ (see Theorem 2 of \cite{BrendleDuke}).

%If (\ref{PIC1}) is weakened to
%\begin{align} \label{NIC1}
%\RR_{1313}+\lambda^{2}\RR_{1414}+\RR_{2323}+\lambda^{2}\RR_{2424}-2\lambda\RR_{1234} \geq 0 \text{ for all } \lambda\in [0,1].
%\end{align}	

In earlier work of Brendle-Schoen \cite{bs091}, they proved a differentiable sphere theorem for quarter-pinched metrics.  We also remark that C. B\"ohm and B. Wilking \cite{bohmwilking} had earlier showed that if the curvature operator is two-positive, then the Ricci flow converges to a constant curvature metric.  It is not difficult to see that two-positivity of $\RR$ implies PIC1.  All of these results can be viewed as (differentiable) sphere theorems for curvature of the first kind.

To prove Theorem \ref{Main1}, we show

\begin{theorem} \label{Main2a}  Let $(M,g)$ be a Riemannian manifold of dimension $n \geq 4$ for which $\hR$ is two-positive (resp., two-non-negative).  Then $(M,g)$ satisfies PIC1 (resp. NIC1).
\end{theorem}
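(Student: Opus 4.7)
The plan is to convert positivity of the PIC1 expression
\begin{equation*}
Q(\lambda) \,:=\, \RR_{1313} + \lambda^2 \RR_{1414} + \RR_{2323} + \lambda^2 \RR_{2424} - 2\lambda\, \RR_{1234}
\end{equation*}
into the statement that $\hR$ has positive trace on a carefully chosen $2$-dimensional subspace of $S^2_0(V)$. Fix an orthonormal 4-frame $\{e_1,e_2,e_3,e_4\}$ at a point and $\lambda \in [0,1]$; first I would introduce the pair
\begin{equation*}
\eta^{+} \,:=\, e^1 \odot e^3 + \lambda\, e^2 \odot e^4, \qquad \eta^{-} \,:=\, e^2 \odot e^3 - \lambda\, e^1 \odot e^4 .
\end{equation*}
Both tensors are purely off-diagonal, hence lie in $S^2_0(V)$; their supports in the basis $\{e^a \odot e^b\}$ are disjoint so they are mutually orthogonal, and each has squared norm $(1+\lambda^2)/2$.

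Next I would compute $\hR(\eta^{+}, \eta^{+}) + \hR(\eta^{-}, \eta^{-})$ directly from the definition of $\hR$. The only nontrivial inputs are the two cross-terms, which after invoking the first Bianchi identity become
\begin{equation*}
\hR(e^1 \odot e^3,\, e^2 \odot e^4) = \tfrac{1}{2} (\RR_{1324} - 2\RR_{1234}), \qquad \hR(e^1 \odot e^4,\, e^2 \odot e^3) = \tfrac{1}{2} (\RR_{1234} + \RR_{1324}).
\end{equation*}
On summing the two quadratic forms, the $\RR_{1324}$ contributions cancel and a short calculation yields $\hR(\eta^{+}, \eta^{+}) + \hR(\eta^{-}, \eta^{-}) = \tfrac{1}{2} Q(\lambda) - 2\lambda\, \RR_{1234}$. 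Since $\eta^\pm/\|\eta^\pm\|$ is an orthonormal pair in $S^2_0(V)$, two-positivity of $\hR$ forces the left-hand side to be strictly positive, which rearranges to $Q(\lambda) > 4\lambda\, \RR_{1234}$.

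To handle the case $\RR_{1234} < 0$, I would rerun the argument with the sign-altered pair
\begin{equation*}
\tilde\eta^{+} := e^1 \odot e^3 - \lambda\, e^2 \odot e^4, \qquad \tilde\eta^{-} := e^2 \odot e^3 + \lambda\, e^1 \odot e^4 ,
\end{equation*}
obtained from the first by $\lambda \mapsto -\lambda$. The analogous computation gives $\hR(\tilde\eta^{+},\tilde\eta^{+}) + \hR(\tilde\eta^{-},\tilde\eta^{-}) = \tfrac{1}{2} Q(\lambda) + 4\lambda\, \RR_{1234}$, hence $Q(\lambda) > -8\lambda\, \RR_{1234}$. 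Combining the two bounds finishes the argument: if $\RR_{1234} \geq 0$ the first gives $Q(\lambda) > 4\lambda \RR_{1234} \geq 0$, and if $\RR_{1234} \leq 0$ the second gives $Q(\lambda) > -8\lambda \RR_{1234} \geq 0$. In either case $Q(\lambda) > 0$, which is the PIC1 inequality for the chosen frame and $\lambda$; the NIC1 statement under two-non-negativity of $\hR$ is obtained by replacing each strict inequality with its non-strict analogue.

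The main obstacle is computational rather than conceptual: the $\odot$-symmetrization introduces explicit $\tfrac{1}{2}$ factors and the first Bianchi identity unavoidably couples $\RR_{1234}$ with $\RR_{1324}$, so one must verify carefully that the spurious $\RR_{1324}$ terms cancel out of the sums. A useful conceptual anchor is the observation that $\eta^{+} + i \eta^{-} = \zeta_1 \odot \bar\zeta_2$ and $\tilde\eta^{+} + i \tilde\eta^{-} = \zeta_1 \odot \zeta_2$, where $\zeta_1 = e_1 + ie_2$ and $\zeta_2 = e_3 + i\lambda e_4$ are the standard complex $1$-vectors underlying Brendle's complexified formulation of PIC1. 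This identifies the two ans\"atze as the real/imaginary decompositions of the ``mixed-type'' and ``holomorphic'' symmetric pairings built from $\zeta_1, \zeta_2$, and explains structurally why exactly these two captures both signs of $\RR_{1234}$.
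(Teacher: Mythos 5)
Your proof is correct and takes essentially the same route as the paper: the test tensors $\eta^{\pm}$ and $\tilde\eta^{\pm}$ are exactly the paper's $h_1, h_2, h_4, h_3$, the Bianchi identity is used the same way to eliminate the $\RR_{1324}$ terms, and the two complementary lower bounds on $Q(\lambda)$ recombine precisely as in the paper (the overall constant in your cross-term formulas differs from the paper's by a factor of $4$, reflecting a different normalization of $\odot$, but this is irrelevant to the sign argument). The closing observation identifying $\eta^{+}+i\eta^{-}$ and $\tilde\eta^{+}+i\tilde\eta^{-}$ with $\zeta_1\odot\bar\zeta_2$ and $\zeta_1\odot\zeta_2$ is a nice structural explanation, not present in the paper, of why these particular pairs of test tensors appear.
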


Theorem \ref{Main1} therefore follows from Theorem \ref{Main2a} and Theorem 2 of \cite{Brendle08PIC1}.  We will also show

\begin{theorem} \label{Main2b} Let $(M,g)$ be a Riemannian manifold of dimension $n \geq 4 $ for which $\hR$ is four-positive (respectively, four-non-negative).  Then $(M,g)$ satisfies \text{PIC} (resp., non-negative isotropic curvature).  
\end{theorem}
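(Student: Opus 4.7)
The strategy is parallel to the proof of Theorem \ref{Main2a}, but with four orthonormal test tensors in $S^2_0(T_pM)$ instead of two, so that four-positivity of $\hat{\RR}$ can be invoked in place of two-positivity. Specifically, given a point $p \in M$ and an orthonormal 4-frame $\{e_1,e_2,e_3,e_4\}\subset T_pM$, the plan is to construct four mutually orthonormal trace-free symmetric two-tensors $h_1,h_2,h_3,h_4 \in S^2_0(T_pM)$ satisfying
\begin{equation*}
\sum_{k=1}^{4}\hat{\RR}(h_k,h_k) \;=\; C\cdot\bigl(R_{1313}+R_{1414}+R_{2323}+R_{2424}-2R_{1234}\bigr)
\end{equation*}
for a positive constant $C$. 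The Rayleigh characterization of the sum of the smallest four eigenvalues of $\hat{\RR}$ gives $\sum_{k}\hat{\RR}(h_k,h_k) \geq \lambda_1+\lambda_2+\lambda_3+\lambda_4$, which is positive (resp.\ non-negative) by hypothesis; hence the displayed expression is positive (resp.\ non-negative) at the arbitrary point $p$, yielding PIC (resp.\ non-negative isotropic curvature).

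Writing $\tilde e^{ij} := \sqrt{2}\, e^i \odot e^j$ for the unit-norm off-diagonal basis, a natural starting pair is $h_1 = \tfrac{1}{\sqrt{2}}(\tilde e^{13}+\tilde e^{24})$ and $h_2 = \tfrac{1}{\sqrt{2}}(\tilde e^{14}-\tilde e^{23})$, as in the $\lambda=1$ case of Theorem \ref{Main2a}. Using the formula $\hat{\RR}(\tilde e^{ij},\tilde e^{k\ell}) = R_{ik\ell j}+R_{i\ell k j}$ together with the first Bianchi identity, one checks directly that
$\hat{\RR}(h_1,h_1)+\hat{\RR}(h_2,h_2) = \tfrac{1}{2}(R_{1313}+R_{1414}+R_{2323}+R_{2424}) - 3R_{1234},$
where the "extra" first-Bianchi-independent component $R_{1324}$, arising from the cross pairings $\hat{\RR}(\tilde e^{13},\tilde e^{24})$ and $\hat{\RR}(\tilde e^{14},\tilde e^{23})$, cancels between the two. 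Two further orthonormal tensors $h_3,h_4$, orthogonal to $h_1,h_2$, must then supply the missing half of the diagonal PIC terms and adjust the coefficient of $R_{1234}$ from $-3$ to $-2$, while avoiding contributions of the unwanted sectional curvatures $R_{1212},R_{3434}$, any residual $R_{1324}$, or Ricci-type cross terms $R_{ijkj}$ that arise when mixing diagonal and off-diagonal tensors with shared indices.

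A plausible ansatz combines the off-diagonal tensor $\tfrac{1}{\sqrt{2}}(\tilde e^{12}+\tilde e^{34})$ -- whose $\hat{\RR}$-value $\tfrac{1}{2}(R_{1212}+R_{3434}) + R_{1234} - 2R_{1324}$ supplies the needed $R_{1234}$ -- with the trace-free signature diagonal tensor $\tfrac{1}{2}(e^1\odot e^1+e^2\odot e^2-e^3\odot e^3-e^4\odot e^4)$, whose $\hat{\RR}$-value $\tfrac{1}{2}(-R_{1212}+R_{1313}+R_{1414}+R_{2323}+R_{2424}-R_{3434})$ precisely cancels the extraneous $R_{1212}+R_{3434}$ and fills in half of the PIC sectional curvatures. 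I expect the main obstacle to be the residual $R_{1324}$ mismatch remaining after this combination, which must be eliminated by a further modification of the quadruple -- for example by mixing in a small combination of the cross-pair tensors $(\tilde e^{13}-\tilde e^{24})/\sqrt{2}$ and $(\tilde e^{14}+\tilde e^{23})/\sqrt{2}$ chosen so that $R_{1324}$ terms cancel without reintroducing other unwanted components. Once the correct orthonormal quadruple is verified, the Rayleigh estimate closes the proof; the non-negative case follows by replacing the strict inequalities throughout with non-strict ones.
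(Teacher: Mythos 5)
Your overall strategy is sound and matches the paper's at a high level: build orthonormal test elements of $S^2_0$ out of the frame $\{e_1,\dots,e_4\}$, apply the four-positivity hypothesis via the minimax characterization, and recover the isotropic-curvature quantity. Your intermediate calculations for $h_1,h_2,h_3,h_4$ are correct. However, the proof is not complete, and the point at which you stop is precisely where the real work lies.

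The concrete gap is the residual $-2R_{1324}$ term. Your sum equals $R_{1313}+R_{1414}+R_{2323}+R_{2424}-2R_{1234}-2R_{1324}$, and you propose to remove the last term ``by mixing in a small combination of the cross-pair tensors $(\tilde e^{13}-\tilde e^{24})/\sqrt 2$ and $(\tilde e^{14}+\tilde e^{23})/\sqrt 2$.'' This is not verified, and there is good reason to doubt it works as stated: the cross terms $\hat\RR(h_1,(\tilde e^{13}-\tilde e^{24})/\sqrt 2)=\tfrac12(R_{1313}-R_{2424})$ and $\hat\RR(h_2,(\tilde e^{14}+\tilde e^{23})/\sqrt 2)=\tfrac12(R_{1414}-R_{2323})$ are not zero in general, so any rotation of the quadruple reintroduces unwanted diagonal differences into the sum rather than isolating $R_{1324}$. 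The proposal therefore leaves the essential algebraic step unresolved.

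The paper resolves this differently and does not look for a single quadruple at all. It introduces six mutually orthogonal tensors $h_1,\dots,h_6$ and applies the four-positivity hypothesis to two \emph{different} quadruples drawn from them, $\{h_1,h_2,h_4,h_5\}$ and $\{h_1,h_2,h_3,h_6\}$. Each quadruple yields its own inequality: the first gives $0<\RR_{1414}+\RR_{2323}$ and the second gives $0<3(\RR_{1313}+\RR_{2424})+(\RR_{1414}+\RR_{2323})-6\RR_{1234}$. Adding the second to twice the first produces $3\cdot(\RR_{1313}+\RR_{1414}+\RR_{2323}+\RR_{2424}-2\RR_{1234})>0$. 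The crucial idea you are missing is this linear combination of \emph{two separate applications} of the hypothesis rather than one exact identity; the coefficient $2$ in front of the first inequality cannot be realized as a single rank-four projection, which is why a single quadruple does not appear. (A different route to close your gap exists: apply your inequality to the frames obtained by permuting and sign-changing $e_1,e_2$, which transforms $(R_{1234},R_{1324})\mapsto(-R_{1234},R_{1324}-R_{1234})$ and $\mapsto(-R_{1234},-R_{1324})$, and take a suitable positive combination of the resulting four inequalities to eliminate $R_{1324}$. But you do not observe this, and it is not the route the paper takes.)
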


Combining Theorem \ref{Main2b} with the work of Micallef-Moore \cite{MM88}, we have 

\begin{theorem} \label{Main2c} Let $(M,g)$ be a simply connected Riemannian manifold of dimension $n \geq 4$ for which $\hR$ is four-positive.  Then $(M,g)$ is homeomorphic to $S^n$. 
\end{theorem}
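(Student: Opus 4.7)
The statement of Theorem~\ref{Main2c} is designed so that the proof is an assembly of Theorem~\ref{Main2b} with an external sphere theorem, so the plan is short and essentially mechanical.

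First, I would apply Theorem~\ref{Main2b} directly: since $\hR$ is four-positive on $(M,g)$, the manifold satisfies PIC. This is precisely the conclusion already granted by the preceding theorem in the excerpt, and requires no further work at this stage.

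Second, I would invoke the Micallef--Moore topological sphere theorem \cite{MM88}, which asserts that every closed simply connected Riemannian manifold of dimension $n \geq 4$ with positive curvature on totally isotropic two-planes is homeomorphic to $S^n$. Recall the strategy of Micallef--Moore: PIC forces any nonconstant harmonic two-sphere to have Morse index at least $\lfloor (n-2)/2 \rfloor$ via the second variation formula for the energy, while the existence theory of Sacks--Uhlenbeck produces low-index harmonic two-spheres representing nontrivial elements of $\pi_k(M)$ whenever such classes are present for $2 \leq k \leq n/2$. Comparing the two bounds yields $\pi_k(M)=0$ in that range; combined with simple connectivity this determines the homotopy type of $S^n$, and the resolved Poincar\'e conjecture upgrades this to a homeomorphism. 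Splicing this conclusion onto the PIC output of Theorem~\ref{Main2b} gives the result.

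There is no serious obstacle to overcome; the entire analytic content sits in Theorem~\ref{Main2b}, which is taken as given. The only point that deserves a brief comment is the compactness hypothesis: the Micallef--Moore argument needs $M$ to be closed (so that Sacks--Uhlenbeck produces minimizing harmonic spheres), and four-positivity of $\hR$ does not by itself yield a Ricci lower bound strong enough to deduce compactness via Bonnet--Myers, since individual eigenvalues of $\hR$ may be negative. I would therefore either read ``closed'' as implicit in the statement of Theorem~\ref{Main2c}, in keeping with the standing context of Nishikawa's conjecture and Theorem~\ref{Main1}, or add it explicitly to the hypotheses.
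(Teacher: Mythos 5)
Your proposal matches the paper's proof exactly: the paper deduces Theorem \ref{Main2c} by combining Theorem \ref{Main2b} (four-positivity of $\hR$ implies PIC) with the Micallef--Moore topological sphere theorem, precisely as you describe. Your observation that ``closed'' should be read as an implicit hypothesis is a fair catch; the paper omits it from the statement but the Micallef--Moore argument does require compactness, and the standing context of the paper makes clear that compactness is intended.
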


Subsequently, Brendle showed that Einstein manifolds of dimension $n \geq 4$ with PIC have constant sectional curvature, and if $(M,g)$ has non-negative isotropic curvature, then it is locally symmetric \cite{BrendleDuke} (the four-dimensional case was earlier proved by Micallef-Wang \cite{mw93}).  Therefore, a further consequence of Theorem \ref{Main2b} is

\begin{theorem} \label{Main3}  Let $(M, g)$ be a compact Einstein manifold of dimension $n \geq 4$.  If $\hR$ is four-positive, then $(M, g)$ has constant sectional
curvature.  If $\hR$ is four-non-negative, then $(M, g)$ is locally symmetric
\end{theorem}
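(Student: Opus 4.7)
The plan is to deduce Theorem \ref{Main3} as a direct corollary of Theorem \ref{Main2b} combined with the existing rigidity theory for Einstein manifolds that satisfy a positivity condition on isotropic curvature. The Einstein hypothesis plays no role whatsoever in the passage from curvature of the second kind to isotropic curvature; it is used only at the very last step, where it interacts with PIC/NIC to force extreme rigidity.

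The first step is to apply Theorem \ref{Main2b} directly. If $\hR$ is four-positive then $(M,g)$ satisfies PIC, and if $\hR$ is four-non-negative then $(M,g)$ has non-negative isotropic curvature. After this step, the hypothesis on curvature of the second kind has been entirely translated into the language of isotropic curvature, and the problem becomes a statement purely about Einstein manifolds satisfying (N)PIC.

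The second step is to invoke Brendle's rigidity theorem \cite{BrendleDuke}, which asserts that a compact Einstein manifold of dimension $n \geq 4$ with PIC has constant sectional curvature, and that one with non-negative isotropic curvature is locally symmetric. In the borderline dimension $n=4$, the non-negative case was established earlier by Micallef--Wang \cite{mw93}. Combining these cited rigidity results with the conclusion of step one immediately yields both assertions of Theorem \ref{Main3}.

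Within this scheme, there is no substantive obstacle: the entire proof is a two-line syllogism. All the real work is concentrated in Theorem \ref{Main2b}, whose proof must exhibit, for any orthonormal 4-frame, explicit trace-free symmetric 2-tensors $h_1,\dots,h_4 \in S^2_0(V)$ whose pairwise $\hR$-evaluations sum to a positive multiple of the isotropic curvature quantity $\RR_{1313}+\RR_{1414}+\RR_{2323}+\RR_{2424}-2\RR_{1234}$. That algebraic construction is the genuine content; granting Theorem \ref{Main2b}, Theorem \ref{Main3} is purely citational.
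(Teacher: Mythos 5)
Your proposal is correct and is essentially identical to the paper's own argument: the paper also obtains Theorem \ref{Main3} by first applying Theorem \ref{Main2b} to pass from four-positivity (resp.\ four-non-negativity) of $\hR$ to PIC (resp.\ non-negative isotropic curvature), and then citing Brendle's rigidity theorem for Einstein manifolds \cite{BrendleDuke} (with the four-dimensional non-negative case credited to Micallef--Wang \cite{mw93}). No further comment is needed.
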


\subsection{Dimension Four}  For our next results we study curvature of the second kind in dimension four.  If $(M^4,g)$ is a closed, oriented four-manifold, recall that Singer-Thorpe \cite{st69} showed that the curvature operator has a canonical block decomposition of the form
	\begin{equation}\label{R4block}
	\RR = \left (\begin{array}{ll}
	\WW^{+} + \frac{1}{12}S \, I & \hskip.25in \BB \hskip.25in \\
	\hskip.25in \BB^t \hskip.25in & \WW^{-} + \frac{1}{12} S \, I,
	\end{array}
\right),
	\end{equation}
where $\WW^{\pm} : \Lambda^2_{\pm} \rightarrow \Lambda^2_{\pm}$ denotes the (anti-)self-dual Weyl tensor, and $\BB : \Lambda^2_{+} \rightarrow \Lambda^2_{-}$ is determined by the trace-free Ricci tensor, and $S$ is the scalar curvature.  In particular, $\BB$ vanishes if and only if $(M^4,g)$ is Einstein (See Section \ref{Sec2} for more details).

Analogous to this decomposition for $\RR$, we prove the following block decomposition for the matrix associated to the bi-linear form $\hat{\RR}$:

\begin{theorem} \label{Main4} Let $(M^4,g)$ be a closed, oriented four-manifold.  Then there is a basis of $S^2_0(TM^4)$ with respect to which the matrix of $\hat{\RR}$ is given by
\begin{align} \label{Rmatrix}
\hat{R} = \left (\begin{array}{lll}
  \, \mathcal{D}_1 \, & \, \mathcal{O}_1 \, & \, \mathcal{O}_2 \, \\
\, - \mathcal{O}_1 \, & \, \mathcal{D}_2 \, & \, \mathcal{O}_3  \, \\
\, - \mathcal{O}_2 \, & \, - \mathcal{O}_3 \, & \, \mathcal{D}_3 \,
\end{array}
\right),
\end{align}
and the $\mathcal{D}_i$'s are diagonal matrices given by
\begin{align} 
\label{D1matrix}
\mathcal{D}_i =  \left (\begin{array}{lll}
 \displaystyle -4( \lambda_i + \mu_1) + \frac{1}{3}S  &  &  \\
 & \displaystyle -4 ( \lambda_i  + \mu_2 ) + \frac{1}{3}S    &  \\
  &   &  \displaystyle -4 ( \lambda_i + \mu_3) + \frac{1}{3}S
\end{array}
\right),
\end{align}
where $\{ \lambda_1, \lambda_2, \lambda_3 \}$ are the eigenvalues of $\WW^{+}$, and $\{ \mu_1, \mu_2, \mu_3 \}$ are the eigenvalues of $\WW^{-}$.   Moreover, $ \mathcal{O}_1, \mathcal{O}_2, \mathcal{O}_3$ are skew-symmetric $3 \times 3$ matrices which vanish if and only if $(M^4,g)$ is Einstein.
\end{theorem}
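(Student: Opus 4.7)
The plan is to exploit the special structure of dimension four, where the bundle of trace-free symmetric $2$-tensors admits a natural tensor product decomposition. On a four-dimensional oriented inner product space $V$ there is an $SO(4)$-equivariant isomorphism $\Lambda^{2}_{+}(V) \otimes \Lambda^{2}_{-}(V) \cong S^{2}_{0}(V)$, since both sides carry the $9$-dimensional irreducible $(1,1)$-representation of $\mathfrak{so}(4) \cong \mathfrak{so}(3)_{+} \oplus \mathfrak{so}(3)_{-}$. Concretely, if $\omega^{+} \in \Lambda^{2}_{+}$ and $\omega^{-} \in \Lambda^{2}_{-}$ are viewed as skew endomorphisms $J_{+}, J_{-}$ of $V$ via the metric, then $J_{+}$ and $J_{-}$ commute (they lie in commuting copies of $\mathfrak{so}(3)$ inside $\mathfrak{so}(4)$), so $J_{+}J_{-}$ is symmetric, and $\mathrm{tr}(J_{+}J_{-}) = 0$ because $\Lambda^{2}_{+} \perp \Lambda^{2}_{-}$ under the Frobenius pairing.

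The first concrete step is to pick orthonormal eigenbases $\{\omega^{+}_{i}\}_{i=1}^{3}$ of $\WW^{+}$ with eigenvalues $\lambda_{i}$ and $\{\omega^{-}_{j}\}_{j=1}^{3}$ of $\WW^{-}$ with eigenvalues $\mu_{j}$, and set $h_{ij} := J^{+}_{i}\,J^{-}_{j}$, ordered so that fixing the self-dual index $i$ produces the $i$-th $3\times 3$ block of (\ref{Rmatrix}). Then $\{h_{ij}\}$ is an orthogonal basis of $S^{2}_{0}(V)$. I would next compute $\hat{\RR}(h_{ij}, h_{kl})$ by splitting $\RR$ into its four Singer-Thorpe summands (the two Weyl pieces, the constant-curvature piece of sectional curvature $S/12$, and the trace-free Ricci block $\BB$) and handling them separately. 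Using (\ref{second}), one finds: the constant-curvature piece contributes $\tfrac{S}{3}\,\delta_{ik}\delta_{jl}$; the $\WW^{+}$-piece acts only through the self-dual index and, since $\{\omega^{+}_{i}\}$ diagonalizes $\WW^{+}$ while $\{\omega^{-}_{j}\}$ is orthonormal, contributes $-4\lambda_{i}\,\delta_{ik}\delta_{jl}$; symmetrically the $\WW^{-}$-piece contributes $-4\mu_{j}\,\delta_{ik}\delta_{jl}$. Summing these three pieces yields exactly the block-diagonal part $\mathrm{diag}(\mathcal{D}_{1}, \mathcal{D}_{2}, \mathcal{D}_{3})$ of (\ref{D1matrix}).

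The remaining contribution comes from $\BB: \Lambda^{2}_{+} \to \Lambda^{2}_{-}$. Because $\BB$ changes the self-dual factor, it couples $h_{ij}$ with $h_{kl}$ only when $i \neq k$, producing the three off-diagonal $3\times 3$ blocks $\mathcal{O}_{1}, \mathcal{O}_{2}, \mathcal{O}_{3}$. The skew-symmetry of each $\mathcal{O}_{a}$ is the subtle point: using the Clifford-type relations satisfied by $\{J^{+}_{a}\}$ and $\{J^{-}_{a}\}$ (arising from their generating the two $\mathfrak{so}(3)$-subalgebras of $\mathfrak{so}(4)$) together with the pair-interchange symmetry $R_{ijkl} = R_{klij}$, one checks that the $\BB$-contribution expressed in the basis $\{h_{ij}\}$ is skew within each $3 \times 3$ off-diagonal block. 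Since each $\mathcal{O}_{a}$ depends linearly on $\BB$, they vanish simultaneously if and only if $\BB = 0$, i.e.\ if and only if $(M^{4}, g)$ is Einstein.

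The main obstacle is the bookkeeping for the $\BB$-contribution: unpacking $\hat{\RR}(h_{ij}, h_{kl})$ with $\BB$ in place requires combining the Clifford relations on both sides with the symmetries of $\BB$ (equivalently, of the trace-free Ricci tensor), and checking that the net effect in each off-diagonal block is precisely a skew $3\times 3$ matrix rather than a general one. By contrast, the numerical coefficients $-4$ and $\tfrac{S}{3}$ in the diagonal blocks follow from a direct expansion of (\ref{second}) once a normalization of $\{h_{ij}\}$ has been fixed.
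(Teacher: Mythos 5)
Your proposal is correct and follows essentially the same strategy as the paper's proof: products $h^{(\alpha,\beta)} = \omega^{\alpha}\eta^{\beta}$ of eigenforms of $\WW^{\pm}$ as an orthogonal basis of $S^2_0$, the Singer--Thorpe splitting of $\RR$ to separate the Weyl and scalar parts (giving the diagonal blocks $\mathcal{D}_i$) from the trace-free Ricci part (giving the off-diagonal skew blocks $\mathcal{O}_a$), and the quaternionic algebra of the eigenforms to pin down the block structure. What you call the ``Clifford-type relations'' is exactly the multiplication table the paper records in Lemma~\ref{multLemma}, and the remaining bookkeeping you defer to matches the explicit computations in Propositions~\ref{Wmat} and~\ref{Emat}.
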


The precise form of $ \mathcal{O}_1, \mathcal{O}_2, \mathcal{O}_3$ are given in Proposition \ref{Emat} in Section \ref{Sec4}.  If $(M^4,g)$ is Einstein then the matrix for $\hR$ is diagonal, and the eigenvalues of $\hR$ are determined by the eigenvalues of $\WW^{\pm}$ and the scalar curvature.  Using the block decomposition for $\hR$ and the work of the first and third authors, we can weaken the assumption of Theorem \ref{Main3} to show

\begin{theorem} 
\label{Main5}
	Let $(M,g)$ be a simply connected Einstein four-manifold such that $\hat{\RR}$ is five-non-negative.  Then $(M^4,g)$ is isometric, up to rescaling, to either the round sphere or complex projective space with the Fubini-Study metric.
\end{theorem}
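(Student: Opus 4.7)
The proof proceeds by first invoking Theorem~\ref{Main4}. Since $(M^4, g)$ is Einstein, the off-diagonal blocks $\mathcal{O}_1, \mathcal{O}_2, \mathcal{O}_3$ vanish, so the matrix of $\hat{\RR}$ is diagonal with entries
\[
\alpha_{ij} = -4(\lambda_i + \mu_j) + \tfrac{S}{3}, \qquad 1 \le i, j \le 3,
\]
where $\lambda_1 \le \lambda_2 \le \lambda_3$ and $\mu_1 \le \mu_2 \le \mu_3$ are the ordered eigenvalues of $\WW^+$ and $\WW^-$. Because $\WW^\pm$ are trace-free, $\sum_{i,j}\alpha_{ij} = 3S$, and the five-non-negativity hypothesis becomes: the sum of the four \emph{largest} $\alpha_{ij}$ is at most $3S$. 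Writing $f(i,j) := \lambda_i + \mu_j$, this is equivalent to the assertion that the sum of the four smallest values of $f$ is at least $-5S/12$.

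Next, I would evaluate this minimum algebraically. Any four-element subset $T \subset \{1,2,3\}^2$ has some row-sum pattern $(a_i)$ and column-sum pattern $(b_j)$ with $\sum_i a_i = \sum_j b_j = 4$ and entries in $\{0,1,2,3\}$, and the corresponding sum $\sum_{(i,j) \in T} f(i,j)$ equals $\sum_i a_i \lambda_i + \sum_j b_j \mu_j$. An enumeration using Gale--Ryser feasibility, combined with the trace-free relations $\sum_i\lambda_i = \sum_j\mu_j = 0$, shows that only three feasible patterns $(a,b)$ can realize the minimum, producing three inequalities which together are equivalent to the five-non-negativity hypothesis:
\begin{align*}
\text{(I)} &: \quad 2\lambda_1 - \lambda_3 + \mu_1 \ge -\tfrac{5S}{12},\\
\text{(II)} &: \quad \lambda_1 + 2\mu_1 - \mu_3 \ge -\tfrac{5S}{12},\\
\text{(III)} &: \quad \lambda_3 + \mu_3 \le \tfrac{5S}{24}.
\end{align*}
Condition (III) has a clean geometric meaning: since for an Einstein four-manifold $K_{\max} = \tfrac12(\lambda_3 + \mu_3) + \tfrac{S}{12}$, it asserts $K_{\max} \le 3S/16$, a pinching on the maximum sectional curvature.

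Finally, I would feed these pinching inequalities into the classification theorem of the first and third authors for Einstein four-manifolds satisfying such a Weyl-curvature pinching. Under these conditions, $(M^4, g)$ must be, up to rescaling, either the round sphere $S^4$ or $\mathbb{CP}^2$ with the Fubini--Study metric. Direct verification supports the picture: $\mathbb{CP}^2$ satisfies (I), (II), (III) strictly, whereas the standard $S^2 \times S^2$ saturates (I) and (II) but violates (III) (since $\lambda_3 + \mu_3 = 4/3$ exceeds $5S/24 = 5/6$ when $S=4$), so it is correctly excluded, consistent with the sharpness assertion in the introduction.

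The principal obstacle is expected to be the combinatorial reduction of the second step --- enumerating the feasible row/column-sum patterns and showing that exactly three of them can attain the minimum across the various regions of $(\lambda,\mu)$-parameter space --- together with verifying that the resulting pinching, especially (III), is strong enough to fall within the scope of the Cao--Tran rigidity theorem invoked at the end.
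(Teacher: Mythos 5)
Your proposal is correct and follows essentially the same path as the paper: reduce five-non-negativity to the three inequalities coming from the possible orderings of the diagonal entries $-4(\lambda_i+\mu_j)+\tfrac{S}{3}$, simplify via the trace-free relations and Berger's normal form to the sectional-curvature pinching $K_{\max}\leq \tfrac{3S}{16}$ (i.e.\ $\RR_{1414}\leq\tfrac34$ under the normalization $\Rc=g$), and then invoke the Cao--Tran classification. The only cosmetic difference is that you phrase the enumeration of extremal five-tuples via order ideals/Gale--Ryser, whereas the paper simply lists the three sums directly and then uses only the inequality you label (III); your uncertainty about which inequality drives the conclusion is resolved exactly as you suspected---(III) alone, via the $K_{\max}$ bound, is what feeds into the cited rigidity theorem.
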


In Section \ref{Examples} we compute the matrix explicitly for certain model cases.  For $(\mathbb{CP}^2,g_{FS})$, where $g_{FS}$ is the Fubini-Study metric, it is easy to see that $\hR$ is five-positive but not four-positive (the latter being clear from Theorem \ref{Main3}).  For $(S^2 \times S^2, g_p)$, where $g_p$ is the product metric, then $\hR$ is not five-non-negative, but is six-non-negative.  Therefore, the assumption of Theorem \ref{Main5} is sharp.

There are a number of results which classify Einstein four-manifolds under various assumptions on the curvature operator (of the first kind); see for example \cite{BrendleDuke, caotran4, cao14einstein, CR14, gl99, yangdg00} and references  therein.

%%%%%%%%%%%

\medskip

The paper is organized as follows:  in Section \ref{Sec2} we summarize the necessary background material and establish our notation and conventions.  In Section \ref{Sec3} we give the proof of Theorems \ref{Main1}, \ref{Main2a}, and \ref{Main2b}.  In Section \ref{Sec4} we give the proof of Theorem \ref{Main4}, and in Section \ref{Sec5} we prove the classification result of Theorem \ref{Main5}.    \\

{\bf Acknowledgment.} The first author acknowledges the support from the Simons Foundation (\#585201).  The second author acknowledges the support of NSF grant DMS-2105460.  The third author is partially supported by a Simons Collaboration Grant and NSF grant DMS-2104988. Also, part of the research was done when he visited the Vietnam Institute for Advanced Study in Mathematics.

%%%%%%%%%%%%%%%%%%%%%%%%%%%%%%%%%%%%%%%%%%%%%%%%%
\section{Preliminaries} \label{Sec2}
%%%%%%%%%%%%%%%%%%%%%%%%%%%%%%%%%%%%%%%%%%%%%%%%%%

\subsection{Notation and conventions}  We adopt the following notation and conventions:  

\begin{itemize} 
	\item $(M^n,g)$ is a Riemannian manifold of dimension $n$. \\
	
	\item $\RR$, $\Rc$, $\SS$, and $\WW$ denote the Riemannian, Ricci, scalar, and Weyl curvatures respectively. $\EE = \Rc = \frac{1}{n} S g$ denotes the traceless Ricci tensor, and $K$ is the sectional curvature. \\

	\item Given $p \in M$, if $\{e_1,..e_n\}$ is an orthonormal basis of $T_pM$, then $\{ e^1, \dots, e^n\}$ denote the dual basis of $T^{*}_pM$. At times we may assume that these bases are locally defined via parallel transport.  \\
	
	\item The tensor product of two one-forms is defined via 
	\[(e^i\otimes e^j)(e_k, e_{\ell}) = \delta_{ik} \delta_{j \ell}.\] 
	The symmetric product of $e^i$ and $e^j$ is given by 
	\[e^i\odot e^j=e^i\otimes e^j+e^j\otimes e^i.\]
	The wedge product is given by
	\[e^i\wedge e^j=e^i\otimes e^j- e^j\otimes e^i.\]
	\ \\

	\item Let $V$ be a finite dimensional vector space. Then ${S}^2 (V)$ and $\Lambda^2(V)$ denote the space of symmetric and skew-symmetric two-tensors (i.e., bilinear forms) on $V$ (2-tensors and 2-forms, respectively). Then the space $T^2(V)$ of bilinear forms on $V$ can be decomposed as  
	\[T^2(V)={S}^2(V) \oplus \Lambda^2 (V).\]
	Also, we let $S^2_0(V)$ denote trace-free symmetric two-tensors. \\
	
	\item The inner product in $S^2(V)$ is given by 
	\begin{equation}
	\label{innerproduct1}
	\left\langle{u, v}\right\rangle = \text{Tr}(u^Tv).
	\end{equation}
	The inner product in $\Lambda^2(V)$ is given by 
	\begin{equation}
	\label{innerproduct1b}
	\left\langle{u, v}\right\rangle = \frac{1}{2}\text{Tr}(u^Tv).
	\end{equation}
	With this convention, $e_{ij}=e^i \wedge e^j$ is an orthonormal basis of $\Lambda^2$, and 
	\begin{equation}\label{innerproduct3} \alpha(e_i,e_j) =  \left\langle{\alpha,e_i\wedge e_j}\right\rangle.\end{equation}
	\ \\ 
	
	\item For $A,B \in S^2$, the {\em Kulkani-Nomizu} product $A\circ B\in S^2(\Lambda^2)$ is defined by
	\[(A\circ B)_{ijkl}=A_{ik}B_{jl}+A_{jl}B_{ik}-A_{il}B_{jk}-A_{jk}B_{il}.\]
	\ \\
	
	\item Let $\mathcal{R}(V)$ be the space of algebraic curvature tensors; i.e., $(4,0)$ tensors satisfying the the same symmetry properties as the Riemannian curvature tensor, along with the first Bianchi identity. Namely, if $T\in \mathcal{R}(V)$, then
	\begin{align*}
	T(e_i, e_j, e_k, e_l) &= -T(e_j, e_i, e_k, e_l)=-T(e_i, e_j, e_l, e_k)= T(e_k, e_l, e_i, e_j),\\
	0 &=T(e_i, e_j, e_k, e_l)+T(e_i, e_k, e_l, e_j)+T(e_i, e_l, e_j, e_k).
	\end{align*} 
	\ \\
	
	\item Any $T\in \mathcal{R}(V)$ can be identified with an element of $\mbox{End}(\Lambda^2)$:  If $\omega\in \Lambda^2$, 
	\[T(\omega)(e_i, e_j):=\sum_{k<l}T(e_i, e_j, e_k, e_l)\omega (e_k, e_l).\]
	As a consequence,
	\begin{align}
	T_{ijkl}&:=T(e_i, e_j, e_k, e_l)\nonumber\\
	\label{innerproduct4}
	&= T(e^i\wedge e^j, e^k\wedge e^l):=\left\langle{T(e^i\wedge e^j), e^k\wedge e^l}\right\rangle.\end{align} 
	\ \\
	
	\item Any $T\in \mathcal{R}(V)$ can also be identified with an element of $\mbox{End}(S^2)$:  If $A\in S^2$, 
\[(\hat{T}A)(e_i, e_k)=\sum_{j,l} T(e_i, e_j, e_l, e_k)A(e_j, e_l).\]
To distinguish this identification from the previous one, we denote the latter by $\hat{T}$ and refer to it as the {\em curvature operator of the second kind.}   

Of course the case of interest to us is When $T = \RR$, the Riemannian curvature tensor of $(M,g)$.  We say that the (Riemannian) curvature operator of the second kind $\hat{\RR}$ is $k$-positive (non-negative) if the sum of any $k$ eigenvalues of $\hat{\RR} \vert_{S^2_0}$ is positive (non-negative).

\end{itemize}

\medskip 

\subsection{Curvature Decomposition} Recall the Riemannian curvature tensor can be decomposed into the Weyl, the Ricci, and the scalar parts.  In terms of the Kulkarni-Nomizu product defined above, we can express this decomposition as  
\begin{equation}
\label{Rdecom}
\RR=\WW+\frac{1}{n-2}\EE\circ g+\frac{\SS}{24}g\circ g.\end{equation}

In dimension four this decomposition gives rise to a decomposition of the curvature operator; see \cite{st69}.  If $(M^4,g)$ is oriented, then the Hodge star operator $* : \Lambda^2 \rightarrow \Lambda^2$, where $\Lambda^2$ is the bundle of two-forms, and induces a splitting
\begin{equation*}
\Lambda^2 =\Lambda^2_{+} \oplus \Lambda^2_{-},
\end{equation*}
where $\Lambda^2_{\pm}$ are the $\pm 1$-eigenspaces of $*$.  With respect to this splitting, the components of the splitting in (\ref{Rdecom}) have the property that 
\begin{align*}
\WW : \Lambda^2_{\pm} &\rightarrow \Lambda^2_{\pm}, \\
\EE \circ g : \Lambda^2_{\pm} &\rightarrow \Lambda^2_{\mp}.
\end{align*}
Consequently, the curvature operator $R : \Lambda^2 \rightarrow \Lambda^2$ has the following block decomposition:
\begin{equation}
\label{Hodgecuvdecom}
\RR =\left( \begin{array}{cc}
\frac{\SS}{12}\id+\WW^+ & \frac{1}{2}\EE\circ g \\
\frac{1}{2}\EE\circ g & \frac{\SS}{12}\id+\WW^-
\end{array} \right),
\end{equation}
where $\WW^{\pm}$ are the restriction of $\WW$ to $\Lambda^2_{\pm}M$.   

We will also need a related normal form for $\RR$ due to M. Berger \cite{berger61}: 

\begin{proposition}
	\label{berger}
	Let $(M,\ g)$ be a four-manifold. At each point $p\in M$, there exists an orthonormal basis $\{e_i\}_{1\leq i\leq
		4}$ of $T_p M$, such that relative to the corresponding basis
	$\{e_i\wedge e_j\}_{1\leq i<j\leq 4}$ of $\wedge^2 T_pM$,
	$\WW$ takes the form
	\begin{equation}
	\label{abba}
	\WW=\left( \begin{array}{cc}
	A & B\\
	B & A
	\end{array}\right),
	\end{equation}
	where $A=\diag\{a_1,\ a_2,\ a_3\}$, $B=\diag \{b_1,\ b_2,\ b_3\}$.
	Moreover, we have the followings:
	
	\begin{enumerate}
		\item  $a_1=\WW(e_1, e_2, e_1, e_2)=\WW(e_3, e_4, e_3, e_4)=\min_{|a|=|b|=1,~ a\perp b}\WW(a,b,a,b)$,
		\item $a_3=\WW(e_1, e_4, e_1, e_4)=\WW(e_1, e_4, e_1, e_4)=\max_{|a|=|b|=1,~ a\perp b}\WW(a,b,a,b)$.
		
		\item $a_2=\WW(e_1, e_3, e_1, e_3)=\WW(e_2, e_4, e_2, e_4)$, 
		
		\item  $b_1=\WW_{1234},\ b_2=\WW_{1342},\ b_3=\WW_{1423}$,
		\item $a_1+a_2+a_3=b_1+b_2+b_3=0$,
		
		\item  $|b_2-b_1|\leq a_2-a_1,\ |b_3-b_1|\leq a_3-a_1,\ |b_3-b_2|\leq
		a_3-a_2$.
	\end{enumerate}
\end{proposition}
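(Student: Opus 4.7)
The plan is to exploit the exceptional Lie algebra isomorphism $\mathfrak{so}(4) \cong \mathfrak{so}(3) \oplus \mathfrak{so}(3)$. In dimension four the Hodge star $*$ splits $\Lambda^2 T_pM = \Lambda^2_+ \oplus \Lambda^2_-$ into three-dimensional $\pm 1$ eigenspaces, and since $\WW$ commutes with $*$ (a special feature of dimension four), it preserves this splitting and restricts to symmetric, traceless endomorphisms $\WW^\pm : \Lambda^2_\pm \to \Lambda^2_\pm$.

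For any orthonormal frame $\{e_i\}$ of $T_pM$, the vectors
\[
\omega_1^\pm = \tfrac{1}{\sqrt{2}}(e^{12} \pm e^{34}), \qquad \omega_2^\pm = \tfrac{1}{\sqrt{2}}(e^{13} \pm e^{42}), \qquad \omega_3^\pm = \tfrac{1}{\sqrt{2}}(e^{14} \pm e^{23})
\]
form orthonormal bases of $\Lambda^2_\pm$. Because the adjoint action of $\mathrm{Spin}(4) = \mathrm{SU}(2) \times \mathrm{SU}(2)$ on its Lie algebra realizes $\mathrm{SO}(3) \times \mathrm{SO}(3)$ independently on $\Lambda^2_+$ and $\Lambda^2_-$, I can rotate $\{e_i\}$ so that both triples $\omega_i^+$ and $\omega_i^-$ simultaneously diagonalize $\WW^\pm$ with eigenvalues $\lambda_i^+$ and $\mu_i^-$, ordered in the same (say, non-decreasing) sense. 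Using $e^{12} = (\omega_1^+ + \omega_1^-)/\sqrt{2}$ and $e^{34} = (\omega_1^+ - \omega_1^-)/\sqrt{2}$, and the analogous formulas for the other two pairs, a direct expansion of $\WW$ in the ordered basis $\{e^{12}, e^{13}, e^{14}, e^{34}, e^{42}, e^{23}\}$ yields the block form (\ref{abba}) with $A = \diag(a_1, a_2, a_3)$, $B = \diag(b_1, b_2, b_3)$, where
\[
a_i = \tfrac{1}{2}(\lambda_i^+ + \mu_i^-), \qquad b_i = \tfrac{1}{2}(\lambda_i^+ - \mu_i^-);
\]
the vanishing of cross-terms like $\WW(e^{12}, e^{13})$ is forced by orthogonality of the $\omega_i^\pm$. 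This gives (4), while (5) follows from $\mathrm{tr}\,\WW^\pm = 0$.

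For the minimax identities (1)--(3), I would use that a unit bivector $\sigma \in \Lambda^2$ is simple (of the form $a \wedge b$ with $a, b$ orthonormal) exactly when $|\sigma_+| = |\sigma_-| = 1/\sqrt{2}$, so
\[
\WW(\sigma,\sigma) = \WW^+(\sigma_+, \sigma_+) + \WW^-(\sigma_-, \sigma_-)
\]
is minimized at $\tfrac12(\lambda_1^+ + \mu_1^-) = a_1$, attained at $\sigma = e^{12}$ and $\sigma = e^{34}$, and maximized at $a_3$, attained at $\sigma = e^{14}$ and $\sigma = e^{23}$. For (6), the monotone pairing gives
\[
a_i - a_j = \tfrac{1}{2}\bigl[(\lambda_i^+ - \lambda_j^+) + (\mu_i^- - \mu_j^-)\bigr], \qquad b_i - b_j = \tfrac{1}{2}\bigl[(\lambda_i^+ - \lambda_j^+) - (\mu_i^- - \mu_j^-)\bigr],
\]
so for $i > j$ both bracketed differences are non-negative, and $|b_i - b_j| \le a_i - a_j$ reduces to the elementary inequality $|x - y| \le x + y$ for $x, y \ge 0$.

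The main obstacle is justifying the simultaneous diagonalization: I have to verify that an independent pair of $\mathrm{SO}(3)$ rotations of $\Lambda^2_+$ and $\Lambda^2_-$ can always be realized by a single $\mathrm{SO}(4)$ rotation of the tangent frame, which is exactly the content of the $\mathrm{Spin}(4) = \mathrm{SU}(2) \times \mathrm{SU}(2)$ description and is why the normal form is special to dimension four. Once this is in hand, the identification of $a_i, b_i$ with eigenvalues of $\WW^\pm$, the extremal characterizations, and the constraint (6) are all elementary linear algebra on the two $3 \times 3$ blocks.
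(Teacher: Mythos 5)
Your argument is correct. The paper itself gives no proof of this proposition — it is stated with a citation to Berger \cite{berger61} — so there is nothing in the text to compare against; what you have written is the standard modern derivation via the self-dual/anti-self-dual splitting. The key points all check out: the surjection $\mathrm{SO}(4)\to\mathrm{SO}(\Lambda^2_+)\times\mathrm{SO}(\Lambda^2_-)$ (a $2$-to-$1$ covering with kernel $\{\pm\mathrm{Id}\}$, equivalently the $\mathrm{Spin}(4)\cong\mathrm{SU}(2)\times\mathrm{SU}(2)$ isomorphism) lets you choose one frame that diagonalizes both $\WW^+$ and $\WW^-$ simultaneously; the identifications $a_i=\tfrac12(\lambda_i+\mu_i)$, $b_i=\tfrac12(\lambda_i-\mu_i)$ then follow by expanding $e^{ij}$ in the eigenform basis; the vanishing of cross-terms is just orthogonality of distinct eigenforms; $\sum a_i=\sum b_i=0$ is $\mathrm{tr}\,\WW^\pm=0$; the extremal characterizations in (1)--(3) follow because a unit bivector is simple precisely when its $\Lambda^2_\pm$ components each have norm $1/\sqrt2$; and (6) is exactly $|x-y|\le x+y$ for $x=\tfrac12(\lambda_i-\lambda_j)\ge 0$, $y=\tfrac12(\mu_i-\mu_j)\ge 0$. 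Two small points worth making explicit if you write this up: the block form in \eqref{abba} is with respect to the reordered basis $\{e^{12},e^{13},e^{14},e^{34},e^{42},e^{23}\}$ rather than the lexicographic $\{e^{12},e^{13},e^{14},e^{23},e^{24},e^{34}\}$ (and the sign $e^{42}$ rather than $e^{24}$ is needed to get $b_2=\WW_{1342}$ rather than $-b_2$), and the simultaneous-diagonalization step should be stated as the surjectivity of $\mathrm{SO}(4)\to\mathrm{SO}(3)\times\mathrm{SO}(3)$, which you have correctly identified as the crux but only sketched.
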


\medskip 

\subsection{Isotropic Curvature}
Next we recall the notion of isotropic curvature and related concepts. The notion of isotropic curvature on 2-planes was introduced by M. Micallef and J. D. Moore in \cite{MM88}.  As mentioned in the Introduction, it played a crucial role in the proof of the differentiable sphere conjecture \cite{bs091} via the Ricci flow. 

\begin{definition}
	\label{PIC}
	$(M,g)$ is said to have non-negative isotropic curvature if, for any orthonormal frame $\{e_1, e_2, e_3, e_4\}$ we have
	\[\RR_{1313}+\RR_{1414}+\RR_{2323}+\RR_{2424}-2\RR_{1234}\geq 0.\]
	If the inequality is strict then it is said to have positive isotropic curvature. 
\end{definition}

The following property is well known (see \cite{MM88}):  

\begin{lemma}
	\label{isolemma1}
	In dimension four, non-negative isotropic curvature is equivalent to
	\[ -\WW^{\pm} + \frac{\SS}{12}\id \geq 0,\]
	as a bilinear form on $\Lambda^2_{\pm}$. 
\end{lemma}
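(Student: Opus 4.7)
My plan is to express the isotropic curvature algebraically as a quadratic form on $\Lambda^2_{\pm}$ via a complex-isotropic two-plane, and then let the frame vary. Fix an orthonormal frame $\{e_1,\dots,e_4\}$ at a point and form the complex two-form
\[
\omega \;=\; (e^1+ie^2)\wedge(e^3+ie^4) \;=\; \alpha + i\beta,
\]
where $\alpha = e^1\wedge e^3 - e^2\wedge e^4$ and $\beta = e^1\wedge e^4 + e^2\wedge e^3$. Both $\alpha$ and $\beta$ are self-dual with respect to the orientation of the frame. A direct expansion using that $\RR$ is symmetric as a bilinear form on $\Lambda^2$, together with one application of the first Bianchi identity (to replace $-2\RR_{1324}+2\RR_{1423}$ by $-2\RR_{1234}$), yields
\[
\RR(\omega,\bar\omega) \;=\; \RR(\alpha,\alpha)+\RR(\beta,\beta) \;=\; \RR_{1313}+\RR_{1414}+\RR_{2323}+\RR_{2424}-2\RR_{1234}.
\]
Thus the frame-dependent isotropic curvature equals the value of the quadratic form of $\RR$ on the orthogonal pair $(\alpha,\beta)\subset\Lambda^2_+$.

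Next I would apply the Singer--Thorpe decomposition \eqref{Hodgecuvdecom}. Since the off-diagonal block of $\RR$ maps $\Lambda^2_+$ into $\Lambda^2_-$, for any $\eta\in\Lambda^2_+$ one has $\langle\RR\eta,\eta\rangle=\langle(\WW^+ + \tfrac{\SS}{12}\id)\eta,\eta\rangle$. Adjoining $\gamma = e^1\wedge e^2+e^3\wedge e^4$ completes $\{\alpha,\beta\}$ to an orthogonal basis of $\Lambda^2_+$. Applying the trace identity in this basis and using that $\WW^+$ is trace-free, the sum $\RR(\alpha,\alpha)+\RR(\beta,\beta)$ collapses to an affine expression in $\SS$ and $\langle\WW^+\gamma,\gamma\rangle$ alone; the particular choice of $\alpha$ and $\beta$ has dropped out.

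To conclude, observe that as the positively oriented frame varies over the orthonormal frames at $p$, the normalized two-form $\gamma/\|\gamma\|$ sweeps out every unit vector in $\Lambda^2_+$ (this is the standard $\mathrm{SO}(4)$-normal form for a unit self-dual two-form). Non-negativity of the isotropic curvature for all positively oriented frames therefore translates into a uniform upper bound for $\langle\WW^+\eta,\eta\rangle$ on the unit sphere of $\Lambda^2_+$, which after rearrangement is precisely the stated condition $-\WW^+ + \tfrac{\SS}{12}\id\geq 0$. The analogous statement on $\Lambda^2_-$ follows by considering frames of the opposite orientation. I expect the main obstacle to be the careful Bianchi manipulation in the first paragraph; once that identification is made, everything that follows is linear algebra inside $\Lambda^2_{\pm}$.
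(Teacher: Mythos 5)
Your overall strategy is the right one, and it is the standard proof of this equivalence (the paper itself offers no proof, deferring to \cite{MM88}): write the isotropic curvature as $\RR(\alpha,\alpha)+\RR(\beta,\beta)$ with $\alpha,\beta$ the self-dual forms you define, use the block structure \eqref{Hodgecuvdecom} to replace $\RR$ on $\Lambda^2_+$ by $\frac{\SS}{12}\id+\WW^+$, complete to the orthogonal triple $\{\alpha,\beta,\gamma\}$ and use $\operatorname{tr}\WW^+=0$, and finally observe that $\gamma$ (normalized) sweeps out the unit sphere of $\Lambda^2_+$ as the oriented frame varies. All of those steps are sound, including the Bianchi bookkeeping in your first paragraph.

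The gap is in the last sentence: you assert ``which after rearrangement is precisely the stated condition'' without doing the arithmetic, and the arithmetic does not produce $\frac{\SS}{12}$. With the paper's normalizations $\|\alpha\|^2=\|\beta\|^2=\|\gamma\|^2=2$, the trace identity gives $\langle\WW^+\alpha,\alpha\rangle+\langle\WW^+\beta,\beta\rangle=-\langle\WW^+\gamma,\gamma\rangle$, so
\[
\RR(\alpha,\alpha)+\RR(\beta,\beta)=\frac{\SS}{12}\bigl(\|\alpha\|^2+\|\beta\|^2\bigr)-\langle\WW^+\gamma,\gamma\rangle=\frac{\SS}{3}-\langle\WW^+\gamma,\gamma\rangle .
\]
Non-negativity over all frames is then equivalent to $\langle\WW^+\hat\gamma,\hat\gamma\rangle\le\frac{\SS}{6}$ for every unit $\hat\gamma\in\Lambda^2_+$, i.e. $-\WW^{\pm}+\frac{\SS}{6}\id\ge 0$, with constant $\frac{\SS}{6}$ rather than $\frac{\SS}{12}$. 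The $\frac{\SS}{6}$ version is the correct statement (equivalently, $\lambda_1+\lambda_2\ge 0$ for the ordered eigenvalues of the curvature operator restricted to $\Lambda^2_{\pm}$): on $\mathbb{CP}^2$ with the Fubini--Study metric, which has non-negative isotropic curvature, the top eigenvalue of $\WW^+$ equals exactly $\frac{\SS}{6}$, so the $\frac{\SS}{12}$ bound would fail. In short, your method is correct, but you should carry out the final normalization; doing so will reveal that the constant in the lemma as printed is off by a factor of two.
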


In the work of Brendle and Schoen, they introduced the following extensions of the notion of non-negative and positive isotropic curvature:   
\begin{definition} 
	\label{PIC1}
	$(M,g)$ is said to be NIC1 if for any orthonormal frame $\{e_1, e_2, e_3, e_4\}$ we have
\[\RR_{1313}+\lambda^{2}\RR_{1414}+\RR_{2323}+\lambda^{2}\RR_{2424}-2\lambda\RR_{1234}\geq 0 \text{ for all } \lambda\in [0,1].\]
If the inequality is strict then $(M,g)$ is said to be PIC1. 	
\end{definition}

\begin{definition} 
	\label{PIC2}
	$(M,g)$ is said to be NIC2 if for any orthonormal frame $\{e_1, e_2, e_3, e_4\}$ we have
	\[\RR_{1313}+\lambda^{2}\RR_{1414}+\mu^{2}\RR_{2323}+\lambda^{2}\mu^{2}\RR_{2424}-2\lambda\mu\RR_{1234}\geq 0 \text{ for all }\lambda, \mu\in [0,1].\]
	If the inequality is strict then $(M,g)$ is said to be PIC2. 	
\end{definition}

Brendle and Schoen observed that all these conditions are preserved under the Ricci flow \cite{bs091, Brendle08PIC1, Brendle18PIC}. In particular, they were able to show the following:  
\begin{theorem}[\cite{Brendle08PIC1}]
	\label{classificationPIC1}
	Let $(M, g)$ be a Riemannian manifold satisfying the PIC1 condition. Then the normalized Ricci flow exists for all time and converges to a constant curvature metric as $t\rightarrow \infty$. In particular, the manifold is diffeomorphic to a spherical space form.  
\end{theorem}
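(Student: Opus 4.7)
The plan is to follow the standard Hamilton--Brendle strategy for Ricci-flow based sphere theorems: (i) identify a curvature cone preserved under the flow, (ii) interpolate it with the cone of constant-curvature operators via a family of pinching sets, (iii) apply Hamilton's maximum principle for systems, and (iv) run Hamilton's convergence machinery to pass from pointwise pinching to a constant-curvature limit. Since PIC1 for $(M,g)$ is equivalent to PIC for $(M\times\mathbb{R},\,g+ds^{2})$, and since Brendle--Schoen proved that the cone of algebraic curvature operators with non-negative isotropic curvature is invariant under the Hamilton ODE $\tfrac{d}{dt}R=R^{2}+R^{\#}$, the first step is to transfer this invariance to the PIC1 cone. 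Concretely, I would verify algebraically that if an algebraic curvature operator $R$ on $\mathbb{R}^{n}$ satisfies PIC1, then the algebraic curvature operator on $\mathbb{R}^{n+1}$ obtained by trivial extension in the extra direction satisfies PIC, and that the Hamilton quadratic commutes with this extension; invariance of the PIC cone then yields invariance of the PIC1 cone, and Hamilton's maximum principle upgrades ODE-invariance to invariance under the Ricci flow PDE.

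Next I would construct a continuous family of closed, convex, $O(n)$-invariant cones $\mathcal{C}(b,s)$ in the space of algebraic curvature operators, parametrized so that $\mathcal{C}(0,0)$ is the PIC1 cone and, as the parameters push into the interior, $\mathcal{C}(b,s)$ shrinks toward the ray spanned by the constant-curvature operator. The defining inequalities are modifications of the PIC1 inequality (\ref{PIC1}) by suitable multiples of the trace-free Ricci and Einstein parts; the key algebraic lemma to check is that each $\mathcal{C}(b,s)$ is invariant under $R\mapsto R^{2}+R^{\#}$. This is done by: writing a curvature operator on the boundary of $\mathcal{C}(b,s)$ in a normal form using the $O(n)$ symmetry, computing the linearized defining inequality along $R^{2}+R^{\#}$, and estimating the sign using the PIC1 assumption at the minimizing four-frame together with second-variation tricks (rotating the four-frame to show that the minimum is critical). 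Hamilton's tensor maximum principle then implies that the solution to the Ricci flow entering $\mathcal{C}(0,0)$ at time $0$ remains in $\mathcal{C}(0,0)$, and that after waiting a uniform time it enters $\mathcal{C}(b,s)$ for progressively tighter parameters.

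With the family of pinching cones in hand, the third step is to show that under the normalized Ricci flow (which is needed because PIC1 forces positive Ricci curvature and therefore shrinking), the rescaled curvature operator is, in an increasingly strong sense, concentrated on the constant-curvature ray. Here I would combine a Harnack-type argument (or Böhm--Wilking's pinching improvement lemma) with compactness of the space of metrics up to scaling and diffeomorphism to extract a smooth limit with $\hat R$ proportional to the identity, hence a round metric. Finally, convergence of the normalized flow to a metric of positive constant curvature identifies $M$ as diffeomorphic to a spherical space form via the classification of such metrics.

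I expect the main obstacle to be the second step: producing the correct pinching family and verifying ODE-invariance of each $\mathcal{C}(b,s)$. This is essentially an algebraic inequality about the reaction term $R^{2}+R^{\#}$ evaluated on degenerate PIC1 curvature operators, and the delicate part is choosing the deformation parameters so that the Weyl, traceless Ricci, and scalar components are controlled simultaneously while preserving the $O(n)$-invariance and convexity of the cone. The remaining steps — preservation under the flow given ODE-invariance, and extraction of a constant-curvature limit from the pinching — are then relatively standard once the pinching family has been constructed.
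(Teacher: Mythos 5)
This theorem is not proved in the paper at all: it is quoted verbatim from Brendle \cite{Brendle08PIC1}, so there is no internal argument to compare against. Your outline does faithfully reproduce the architecture of Brendle's actual proof --- the equivalence of PIC1 with PIC on $M\times\mathbb{R}$, ODE-invariance of the cone via Brendle--Schoen, a B\"ohm--Wilking-type family of pinching cones interpolating toward the constant-curvature ray, Hamilton's maximum principle for systems, and Hamilton's convergence criterion for the normalized flow. As a proof, however, it is only a roadmap: the entire mathematical content of the theorem sits in the step you defer, namely constructing the explicit cones $\mathcal{C}(b,s)$ and verifying their invariance under $R\mapsto R^{2}+R^{\#}$, and nothing in your sketch indicates how that verification would actually be carried out. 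For the purposes of this paper the correct move is exactly what the authors do: cite Brendle rather than reprove the result.
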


%%%%%%%%%%%%%%%%%%%%%%%%%%%%%%%%%%%%%%%%
\section{Curvature of the second kind and PIC}  \label{Sec3}

In this section, we give the proofs to Theorems \ref{Main1}, \ref{Main2a}, and \ref{Main2b}.  

\begin{proof}[Proof of Theorem \ref{Main2a}] Fix a point $p \in M$ and let $\{e^1,...e^n\}$ be an orthonormal basis of $T_p^{*}M$.  We define the following trace-free symmetric two tensors:
	\begin{align*}
		h_1 &= e^1\odot e^3 + \lambda e^2\odot e^4,\\
		h_2 &= e^2\odot e^3 - \lambda e^1\odot e^4.
	\end{align*}
	It is easy to see that $h_1$ and $h_2$ are orthogonal to each other in $S^2$.
	Since $\hat{\RR}$ is two-positive we have
	\begin{align*}
		0 &< \hat{\RR}(h_1, h_1)+  \hat{\RR}(h_2, h_2).
	\end{align*}

	We observe that all components of $h_1$ are trivial except
	\begin{align*}
		h_1(e^1, e^3):= (h_1)_{13} &=(h_1)_{31}=1,\\
		h_1(e^2, e^4):= (h_1)_{24} &=(h_2)_{42}=\lambda.
	\end{align*}
	Then, we calculate
	\begin{align*}
		\2R(h_1, h_1) &= \sum_{ijkl} \RR_{ijkl} (h_1)_{il}(h_1)_{jk} \\
		&= \sum_{i, j, k, l, |l-i|=|k-j|=2} \RR_{ijkl}(h_1)_{il}(h_1)_{jk}\\
		&= 2(2\lambda\RR_{1243}+\RR_{1313}+2\lambda\RR_{1423}+\lambda^2\RR_{2424}).
	\end{align*}
	
	Similarly,
	\begin{align*}
		(h_2)_{23} &= (h_2)_{32}=1,\\
		(h_2)_{14} &= (h_2)_{41}=-\lambda.
	\end{align*}
	Then, we calculate
	\begin{align*}
		\2R(h_2, h_2) &=\sum_{ijkl} \RR_{ijkl} (h_2)_{il}(h_2)_{jk} \\
		&= \sum_{i, j, k, l, l+i=k+j=5} \RR_{ijkl}(h_2)_{il}(h_2)_{jk}\\
		&= 2(-2\lambda\RR_{1234}-2\lambda\RR_{1324}+\lambda^2\RR_{1414}+\RR_{2323}).
	\end{align*}
	
	Combining equations above yields
	\begin{align*}
	0 &< (2\lambda\RR_{1243}+\RR_{1313}+2\lambda\RR_{1423}+\lambda^2\RR_{2424})\\
	&+ (-2\lambda\RR_{1234}-2\lambda\RR_{1324}+\lambda^2\RR_{1414}+\RR_{2323})\\
	&= \RR_{1313}+\RR_{2323}+\lambda^2 (\RR_{1414}+\RR_{2424})-4\lambda\RR_{1234}-2\lambda(\RR_{1432}+\RR_{1324}).
	\end{align*}
	Applying the first Bianchi identity, we obtain
	\begin{equation}
	\label{2poseq1}
	0< (\RR_{1313}+\RR_{2323})+\lambda^2(\RR_{1414}+\RR_{2424})-6\lambda\RR_{1234}.
	\end{equation}
	Interchanging the roles of $e^1$ and $e^2$ and letting 
	\begin{align*}
		h_3 &= e^2\odot e^3+\lambda e^1\odot e^4,
		\end{align*}
		we have 
		\begin{align*}
		\2R(h_3,h_3) &=2(\RR_{2323}+\lambda^2\RR_{1414}+2\lambda\RR_{1324}+2\lambda\RR_{2143}).
		\end{align*}
		Similarly,
	\begin{align*}
		h_4 &= e^1\odot e^3-\lambda e^2\odot e^4,\\
		\2R(h_4,h_4) &=2(\RR_{1313}+\lambda^2\RR_{2424}-2\lambda\RR_{1423}-2\lambda\RR_{2134})
	\end{align*}
	Adding these results together, we obtain
	\begin{align*}
		0 &<(2\lambda\RR_{2143}+\RR_{2323}+2\lambda\RR_{1324}+\lambda^2\RR_{1414})\\
		&+ (-2\lambda\RR_{2134}-2\lambda\RR_{1423}+\lambda^2\RR_{2424}+\RR_{1313})\\
		&= \RR_{1313}+\RR_{2323}+\lambda^2 (\RR_{1414}+\RR_{2424})-4\lambda\RR_{2134}-2\lambda(\RR_{1423}+\RR_{1342}).		
	\end{align*}
Applying the first Bianchi identity, we obtain
\begin{equation}
\label{2poseq2}
0< (\RR_{1313}+\RR_{2323})+\lambda^2(\RR_{1414}+\RR_{2424})-6\lambda\RR_{2134}.
\end{equation}
From equations (\ref{2poseq1}) and (\ref{2poseq2}), one concludes that
	\[\RR_{1313}+\RR_{2323}+\lambda^2\RR_{1414}+\lambda^2\RR_{2424}> |6\lambda \RR_{1234}|.\]
		By Defintion \ref{PIC1}, the \text{PIC1} condition is equivalent to
	\[ \RR_{1313}+\RR_{2323}+\lambda^2\RR_{1414}+\lambda^2\RR_{2424}+2\lambda\RR_{1234} > 0.\]
	The result then follows.
\end{proof}

\medskip 

\begin{proof}[Proof of Theorem \ref{Main1}] 
	By Theorem \ref{Main2a}, the curvature is PIC1. The result follows from Theorem \ref{classificationPIC1}.
\end{proof}

\medskip

%study the relationship between curvature of the second kind, PIC, and the extensions of PIC described in the previous section.  Our first result is 
\begin{proof}[Proof of Theorem \ref{Main2b}] 
 
	As before, we fix a point $p \in M$ and let $\{e^1,...e^n\}$ be an orthonormal of $T_p^{*}M$. We define the following traceless symmetric two tensors:
	\begin{align*}
	h_1 &= \frac{1}{2}(-e^1\odot e^1- e^2\odot e^2+e^3\odot e^3+e^4\odot e^4),\\
	h_2 &= e^1\odot e^4- e^2\odot e^3,\\
	h_3 &= -e^1\odot e^3- e^2\odot e^4,\\
	h_4 &= -e^1\odot e^4- e^2\odot e^3,\\
	h_5 &= \frac{1}{2}(-e^1\odot e^1+e^2\odot e^2-e^3\odot e^3+e^4\odot e^4),\\
	h_6 &= \frac{1}{2}(-e^1\odot e^1+ e^2\odot e^2+e^3\odot e^3-e^4\odot e^4).		
	\end{align*}
	It is easy to see that these tensors are of the same magnitude and are  mutually orthogonal in $S^2$.
	
	Since $\hat{\RR}$ is 4-positive we have
	\begin{align*}
		0 &< \hat{\RR}(h_1, h_1)+  \hat{\RR}(h_2, h_2)+ \hat{\RR}(h_4, h_4)+ \hat{\RR}(h_5, h_5).
	\end{align*}
	We compute
	\begin{align*}
	 \hat{\RR}(h_1, h_1) &= \sum_{ijkl} \RR_{ijkl} (h_1)_{il}(h_1)_{jk} \\
	 &= \sum_{i, j}\RR_{ijji}(h_1)_{ii}(h_1)_{jj}\\
	 &= 2(-\RR_{1212}-\RR_{3434}+\RR_{1313}+\RR_{1414}+\RR_{2323}+\RR_{2424}).
	\end{align*}
	Next,
	\begin{align*}
		 \hat{\RR}(h_2, h_2) &= \sum_{ijkl} \RR_{ijkl} (h_2)_{il}(h_2)_{jk} \\
		 &= \sum_{i, j}\RR_{ij(5-j)(5-i)}(h_2)_{i(5-i)}(h_2)_{j(5-j)}\\
		 &= 2(\RR_{1414}+\RR_{2323}+2\RR_{1243}+2\RR_{1342}).
	\end{align*}
	Similarly,
	\begin{align*}
		\hat{\RR}(h_4, h_4) &= 2(-2\RR_{1243}-2\RR_{1342}+\RR_{1414}+\RR_{2323}),\\
		\hat{\RR}(h_5, h_5) &= 2(-\RR_{1313}-\RR_{2424}+\RR_{1414}+\RR_{2323}+\RR_{1212}+\RR_{3434}).\\
	\end{align*}
	Combining equations above yields
	\begin{equation}
	\label{4poseq1}
	0 < \RR_{1414}+\RR_{2323}.
	\end{equation}
	Next, we consider
	\[0 <  \hat{\RR}(h_1, h_1)+  \hat{\RR}(h_2, h_2)+ \hat{\RR}(h_3, h_3)+ \hat{\RR}(h_6, h_6).\]
	Here,
	\begin{align*}
	\hat{\RR}(h_3, h_3) &= 2(-2\RR_{1234}-2\RR_{1432}+\RR_{1313}+\RR_{2424}),\\
	\hat{\RR}(h_6, h_6) &=  2(-\RR_{1414}-\RR_{2323}+\RR_{1313}+\RR_{1212}+\RR_{3434}+\RR_{2424}).	
	\end{align*}
	Therefore, combining equations above yield
	\begin{align*}
	0 &< (\RR_{1313}+\RR_{1414}+\RR_{2323}+\RR_{2424}-\RR_{1212}-\RR_{3434})\\
	&+(2\RR_{1243}+2\RR_{1342}+\RR_{1414}+\RR_{2323})\\
	&+(\RR_{1313}+\RR_{2424}-2\RR_{1234}-2\RR_{1432})\\
	&+(\RR_{1313}+\RR_{1212}+\RR_{2424}+\RR_{3434}-\RR_{1414}-\RR_{2424})\\
	&= 3(\RR_{1313}+\RR_{2424})+(\RR_{1414}+\RR_{2323})-4\RR_{1234}-2(\RR_{1324}+\RR_{432}).
	\end{align*}
	Applying the first Bianchi identity, we obtain
	\begin{equation}
	\label{4poseq2}
	0< 3(\RR_{1313}+\RR_{2424})+(\RR_{1414}+\RR_{2323})-6\RR_{1234}.
	\end{equation}
	Adding (\ref{4poseq2}) and twice of (\ref{4poseq1}) gives
	\[0 < 3(\RR_{1313}+\RR_{1414}+\RR_{2323}+\RR_{2424}-2\RR_{1234}).\]
	Since the inequality holds for any orthonormal four-tuple $(e_1, e_2, e_3, e_4)$, we conclude that the manifold has positive isotropic curvature.

\end{proof}

As explained in the Introduction, Theorems \ref{Main2c} and \ref{Main3} follow from Theorem \ref{Main2b} and Micallef-Wang's work \cite{MM88} and Brendle's classification of Einstein manifold with non-negative isotropic curvature \cite{BrendleDuke}.

%%%%%%%%%%%%%%%%%%%%%%%%%%%%%%%%%%%%%%%%%%%%%%%%%%%%%%%%%%%%%%%%%%%%%%%%%%%
\section{Dimension four: matrix representation of $\hR$} \label{Sec4}
%%%%%%%%%%%%%%%%%%%%%%%%%%%%%%%%%%%%%%%%%%%%%%%%%%%%%%%%%%%%%%%%%%%%%%%%%%%

Let $(M^4,g)$ be an oriented Riemannian four-manifold, and $p \in M^4$.  The space of two forms $\Lambda^2(T_p M^4)$ splits into the space of self-dual and anti-self-dual two-forms:
\begin{align*}
\Lambda^2(T_p M^4) = \Lambda_{+}^2(T_p M^4) \oplus \Lambda_{-}^2(T_p M^4)
\end{align*}
If $\{ e^1 , e^2, e^3, e^4 \}$ is an orthonormal basis of $T_p^{*}X^4$, then the two-forms
\begin{align} \label{ombase} \begin{split}
\omega^1 &= (e^1 \wedge e^2 + e^3 \wedge e^4), \\
\omega^2 &= (e^1 \wedge e^3 - e^2 \wedge e^4), \\
\omega^3 &= (e^1 \wedge e^4 + e^2 \wedge e^3),
\end{split}
\end{align}
constitute an orthogonal basis of $\Lambda_{+}^2(T_p M^4)$ with $|\omega^{\alpha}|^2 = 2$, and
\begin{align} \label{etbase} \begin{split}
\eta^1 &= (e^1 \wedge e^2 - e^3 \wedge e^4), \\
\eta^2 &= (e^1 \wedge e^3 + e^2 \wedge e^4), \\
\eta^3 &= (e^1 \wedge e^4 - e^2 \wedge e^3),
\end{split}
\end{align}
is an orthogonal basis of $\Lambda_{-}^2(T_p M^4)$ with $|\eta^{\beta}|^2 = 2$.

The Weyl tensor of $(M^4,g)$ defines trace-free (symmetric) linear endomorphisms $W^{\pm} : \Lambda_{\pm}^2(T_p M^4) \rightarrow \Lambda_{\pm}^2(T_p M^4)$, hence there are bases of $\Lambda_{\pm}^2(T_p M^4)$ consisting of eigenforms of $W^{\pm}$. Indeed, using Proposition \ref{berger}, we have
\begin{equation}
\WW=
\left( \begin{array}{cc}
(A+B) & 0 \\
0 & (A-B) \end{array} \right).\end{equation}
Here, $A=\text{diag}(a_{1}, a_{2}, a_{3})$, $B=\text{diag}(b_{1}, b_{2}, b_{3})$, and $a_1+a_2+a_3=b_1+b_2+b_3=0$.

As a result, eigenvalues of $\WW^{\pm}$ are ordered,
\begin{equation}
\label{eigenvalue}
\begin{cases}
\lambda_1=a_1+b_1 \leq \lambda_2=a_2+b_2 \leq \lambda_3=a_3+b_3, &\\
\mu_1=a_1-b_1 \leq \mu_2=a_2-b_2 \leq \mu_3=a_3-b_3. &
\end{cases}
\end{equation}

The following result is an excerpt from \cite{Derd83}, and is based on \cite{st69}:

\begin{proposition} \label{DerProp} Let $(M^4,g)$ be an oriented, four-dimensional Riemannian manifold, and $p \in M^4$.  \smallskip

\noindent $(i)$  There is an orthogonal basis of $\Lambda_{+}^2(T_p M^4)$ (respectively, $\Lambda_{-}^2(T_p M^4)$) consisting of eigenforms $\{ \omega^1 , \omega^2, \omega^3 \}$ (resp., $\{ \eta^1, \eta^2, \eta^3 \}$) of $W^{+}$ (resp., $W^{-}$) of the form (\ref{ombase}) (resp., of the form (\ref{etbase})) for some choice of basis  $\{ e^1, \dots, e^4 \}$ of $T_p^{*} M^4$.   \smallskip

\noindent $(ii)$  If $\{ \lambda_1, \lambda_2, \lambda_3\}$ and $\{ \mu_1, \mu_2, \mu_3 \}$ are the eigenvalues of $W^{+}$ and $W^{-}$ respectively, then with respect to these
bases the Weyl tensor is given by
\begin{align} \label{Wform}  \begin{split}
W_{ijk\ell} &= \frac{1}{2} \big[ \lambda_1 \omega^1_{ij} \omega^1_{k \ell} + \lambda_2 \omega^2_{ij} \omega^2_{k \ell} + \lambda_3 \omega^3_{ij} \omega^3_{k \ell} \big] + \frac{1}{2} \big[ \mu_1 \eta^1_{ij} \eta^1_{ k \ell} + \mu_2 \eta^2_{ij} \eta^2_{k \ell} + \mu_3 \eta^3_{ij} \eta^3_{k \ell} \big],
\end{split}
\end{align}
with
\begin{align} \label{tfW1} \begin{split}
\lambda_1 + \lambda_2 + \lambda_3 &= 0, \\
\mu_1 + \mu_2 + \mu_3 &= 0.
\end{split}
\end{align}
\smallskip

\noindent $(iii)$  The bases in (\ref{ombase}) and (\ref{etbase}) have a quaternionic structure:  For $1 \leq \alpha \leq 3$,
\begin{align} \label{omsq} \begin{split}
[(\omega^{\alpha})^2]_{ij} = \omega^{\alpha}_{ik} \omega^{\alpha}_{kj} = -\delta_{ij}, \\
[(\eta^{\alpha})^2]_{ij} = \eta^{\alpha}_{ik} \eta^{\alpha}_{kj} = -\delta_{ij},
\end{split}
\end{align}
where the components are with respect to an orthonormal basis of $T_p M^4$.  Also,
\begin{align} \label{omquat} \begin{split}
(\omega^1 \omega^2)_{ij} &= \omega^1_{ik} \omega^2_{k j} = - \omega^3_{ij}, \\
(\omega^1 \omega^3)_{ij} &= \omega^1_{ik} \omega^3_{k j} =  \omega^2_{ij}, \\
(\omega^2 \omega^3)_{ij} &= \omega^2_{ik} \omega^3_{k j} = - \omega^1_{ij}, \\
(\eta^1 \eta^2)_{ij} &= \eta^1_{ik} \eta^2_{k j} = \eta^3_{ij}, \\
(\eta^1 \eta^3)_{ij} &= \eta^1_{ik} \eta^3_{k j} = - \eta^2_{ij}, \\
(\eta^2 \eta^3)_{ij} &= \eta^2_{ik} \eta^3_{k j} =  \eta^1_{ij}.
\end{split}
\end{align}
\smallskip

\noindent $(iv)$ The bases in (\ref{ombase}) and (\ref{etbase}) generate an orthogonal basis of $S^2_0(T_p^{*}X^4)$, the space of symmetric trace-free
$(0,2)$-tensors by taking
\begin{align} \label{hdef}
h^{(\alpha, \beta)}_{ij}  = \omega^{\alpha}_{ik} \eta^{\beta}_{kj}.
\end{align}
Moreover, $| h^{(\alpha,\beta)}| = 2$.
\end{proposition}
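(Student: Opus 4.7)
The plan is to establish the four parts in order, with (i) carrying the main geometric content and (ii)--(iv) reducing to routine computations in the basis from (i). For (i), the key input is the Lie-algebra splitting $\mathfrak{so}(4) \cong \mathfrak{su}(2)_+ \oplus \mathfrak{su}(2)_-$, under which $SO(4)$ acts on the three-dimensional spaces $\Lambda^2_\pm$ via two commuting $SO(3)$ factors. Starting from an arbitrary orthonormal basis of $T_p^*M^4$, the induced families \eqref{ombase} and \eqref{etbase} are orthogonal bases of $\Lambda^2_\pm$. Since $W^+$ is self-adjoint, I would apply a rotation coming from the $SU(2)_+$ factor to obtain a new orthonormal basis of $T_p^*M^4$ in which the $\omega^\alpha$ diagonalize $W^+$ while the $\eta^\beta$ are unchanged; I would then diagonalize $W^-$ using a commuting rotation from the $SU(2)_-$ factor. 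This is the simultaneous normal form of Singer--Thorpe \cite{st69}.

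For (ii), the spectral theorem gives $W^\pm = \sum_\alpha \lambda_\alpha P_\alpha$, where the orthogonal projection onto $\mathrm{span}(\omega^\alpha)$ in $\Lambda^2_+$ is $\tfrac{1}{2}\omega^\alpha \otimes \omega^\alpha$ (the factor $\tfrac{1}{2}$ coming from $|\omega^\alpha|^2=2$), and analogously for $\eta^\beta$; this yields \eqref{Wform}, and \eqref{tfW1} just records the trace-free property of $W^\pm$ as endomorphisms of $\Lambda^2_\pm$. Part (iii) follows by treating each $\omega^\alpha, \eta^\beta$ as a $4\times 4$ antisymmetric matrix and directly verifying the quaternionic relations; for example, $\omega^1 = e^{12} + e^{34}$ gives $(\omega^1)^2 = -I$ because the cross terms $e^{12}e^{34}$ and $e^{34}e^{12}$ vanish while $(e^{12})^2 + (e^{34})^2 = -I$, and the remaining identities in \eqref{omsq}--\eqref{omquat} are analogous.

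For (iv), I would view $h^{(\alpha,\beta)} = \omega^\alpha \eta^\beta$ as a matrix product. Its transpose is $\eta^\beta \omega^\alpha$, so symmetry is equivalent to the commutation $[\omega^\alpha, \eta^\beta] = 0$; this is the concrete reflection of the independence of the two $SU(2)$ factors and can be checked directly from the definitions. Trace-freeness is $\mathrm{tr}(\omega^\alpha \eta^\beta) = -2\langle \omega^\alpha,\eta^\beta\rangle_{\Lambda^2} = 0$ since $\Lambda^2_+ \perp \Lambda^2_-$, the norm computation is $|h^{(\alpha,\beta)}|^2 = \mathrm{tr}((\omega^\alpha)^2(\eta^\beta)^2) = \mathrm{tr}(I) = 4$, and mutual orthogonality follows from the commutation together with a short case analysis using the identities of (iii): whenever $(\alpha,\beta) \neq (\alpha',\beta')$, the quaternionic relations collapse $\mathrm{tr}(\omega^\alpha\omega^{\alpha'}\eta^\beta\eta^{\beta'})$ to the trace of an antisymmetric matrix or of another trace-free $h^{(\alpha'',\beta'')}$, hence zero. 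Since $\dim S^2_0(T_p^*M^4) = 9$ and there are nine pairwise orthogonal nonzero $h^{(\alpha,\beta)}$, they form an orthogonal basis. The main obstacle is the simultaneous normal form in (i); once that is in hand, everything else is bookkeeping in a well-chosen orthonormal frame.
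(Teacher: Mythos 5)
Your proposal is mathematically sound and fills a gap the paper simply leaves as a citation: the authors state this as ``an excerpt from [Derd83], and is based on [st69]'' and give no proof, so there is no in-paper argument to compare against. Your route is the standard one behind those references. The only place worth slowing down is the commutativity $[\omega^\alpha,\eta^\beta]=0$ in part (iv): you correctly flag it as the linear-algebraic shadow of the $SU(2)_+\times SU(2)_-$ splitting, but note that it does not follow from the quaternionic relations \eqref{omsq}--\eqref{omquat} alone (those only involve products within a single factor), so it really does need the direct matrix check or the observation that $\omega^\alpha$ and $\eta^\beta$ are left and right multiplication by imaginary quaternions under $\mathbb{R}^4\cong\mathbb{H}$; once stated, that check is immediate. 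Two small sanity checks on your bookkeeping: the coefficient $\tfrac12$ in \eqref{Wform} indeed comes from $|\omega^\alpha|^2=2$ under the $\Lambda^2$ inner product $\langle u,v\rangle=\tfrac12\operatorname{Tr}(u^Tv)$, and the trace-free claim in (iv) uses $\operatorname{Tr}(\omega^\alpha\eta^\beta)=-2\langle\omega^\alpha,\eta^\beta\rangle_{\Lambda^2}=0$ because $\Lambda^2_+\perp\Lambda^2_-$, both consistent with the conventions in Section~\ref{Sec2}. The norm in $S^2$ carries no factor of $\tfrac12$, which is why $|h^{(\alpha,\beta)}|^2=\operatorname{Tr}(h^2)=\operatorname{Tr}(\mathrm{Id})=4$ gives $|h^{(\alpha,\beta)}|=2$ as claimed. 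Finally, part (i) rests on surjectivity of $SO(4)\to SO(3)\times SO(3)$ (the two projections of the double cover $Spin(4)=SU(2)\times SU(2)$), which is what lets you rotate $\Lambda^2_+$ to diagonalize $W^+$ while leaving $\Lambda^2_-$ pointwise fixed; that is precisely the Singer--Thorpe normal form and you invoke it correctly.
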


To simplify notation we label the basis in Proposition \ref{DerProp} $(iv)$ in the following way:
\begin{align} \label{htoh} \begin{split}
h^{(1,1)} &= h^1, \ h^{(1,2)} = h^2, \ h^{(1,3)} = h^3, \\
h^{(2,1)} &= h^4, \ h^{(2,2)} = h^5, \ h^{(2,3)} = h^6, \\
h^{(3,1)} &= h^7, \ h^{(3,2)} = h^8, \ h^{(3,3)} = h^9.
\end{split}
\end{align}
Using the quaternionic structure of the bases of eigenforms, it is easy (but tedious) to construct a `multiplication table' for the basis element $\{ h^{\alpha} \}_{\alpha = 1}^9$:

\begin{lemma} \label{multLemma}
The basis elements in (\ref{htoh}) satisfy
\begin{center}
\begin{NiceTabular}{ |c|c|c|c|c|c|c|c|c|c|} 
 \hline
 & $h^1$ & $h^2$ & $h^3$ & $h^4$ & $h^5$ & $h^6$ & $h^7$ & $h^8$ & $h^9$ \\ \hline %\hline
 $h^1$ & $\text{Id}$ & $\ast$ & $\ast$ & $\ast$ & $-h^9$ & $h^8$ & $\ast$ & $h^6$ & $-h^5$\\ \hline
 $h^2$ & $\ast$ & $\text{Id}$ & $\ast$ & $h^9$ & $\ast$ & $-h^7$ & $-h^6$ & $\ast$ & $h^4$\\ \hline
 $h^3$ & $\ast$ & $\ast$ & $\text{Id}$ & $-h^8$ & $h^7$ & $\ast$ & $h^5$ & $-h^4$ & $\ast$\\ \hline
 $h^4$ & $\ast$ & $h^9$ & $-h^8$ & $\text{Id}$ & $\ast$ & $\ast$ & $\ast$ & $-h^3$ & $h^2$\\ \hline
 $h^5$ & $-h^9$ & $\ast$ & $h^7$ & $\ast$ & $\text{Id}$ & $\ast$ & $h^3$ & $\ast$ & $-h^1$\\ \hline
 $h^6$ & $h^8$ & $-h^7$ & $\ast$ & $\ast$ & $\ast$ & $\text{Id}$ & $-h^2$ & $h^1$ & $\ast$\\ \hline
 $h^7$ & $\ast$ & $-h^6$ & $h^5$ & $\ast$ & $h^3$ & $-h^2$ & $\text{Id}$ & $\ast$ & $\ast$ \\ \hline
 $h^8$ & $h^6$ & $\ast$ & $-h^4$ & $-h^3$ & $\ast$ & $h^1$ & $\ast$ & $\text{Id}$ & $\ast$\\ \hline
 $h^9$ & $-h^5$ & $h^4$ & $\ast$ & $h^2$ & $-h^1$ & $\ast$ & $\ast$ & $\ast$ & $\text{Id}$\\
 \hline
\end{NiceTabular}
\end{center}
That is, 
\begin{align*}
(h^{\alpha})_{ij}^2 = h^{\alpha}_{ik} h^{\alpha}_{kj} = \delta_{ij}, 
\end{align*}
and:
\begin{align*}
h^1 h^5 &= - h^9, \ \ \ h^1 h^6 = h^8, \\
h^1 h^8 &= h^6, \ \ \ h^1 h^9 = -h^5, \\
h^2 h^4 &= h^9, \ \ \ h^2 h^6 = - h^7, \\
h^2 h^7 &= - h^6, \ \ \ h^2 h^9 = h^4 \\
h^3 h^4 &= - h^8 \ \ \ h^3 h^5 = h^7, \\
h^3 h^7 &= h^5, \ \ \ h^3 h^8 = - h^4 \\
h^4 h^8 &= -h^3 \ \ \ h^4 h^9 = h^2 \\
h^5 h^7 &= h^3 \ \ \ h^5 h^9 = -h^1 \\
h^6 h^7 &= - h^2 \ \ \ h^6 h^8 = h^1,
\end{align*}
Also, each $\ast$ represents a skew-symmetric matrix. 

\end{lemma}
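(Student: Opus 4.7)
The plan is to reduce the whole multiplication table to a single structural observation: as linear endomorphisms of $T_p M^4$, every self-dual two-form $\omega^{\alpha}$ commutes with every anti-self-dual two-form $\eta^{\beta}$. Once this commutation is established, the entire table falls out of the quaternionic identities in equation (\ref{omquat}) together with the definition $h^{(\alpha,\beta)} = \omega^{\alpha}\eta^{\beta}$ (as matrices).

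First I would prove the commutativity $\omega^{\alpha}\eta^{\beta} = \eta^{\beta}\omega^{\alpha}$. Conceptually this reflects the splitting $\mathfrak{so}(4) = \mathfrak{so}(3)_+ \oplus \mathfrak{so}(3)_-$, but the cleanest verification is direct: write each $\omega^{\alpha}$ and $\eta^{\beta}$ as a $4\times 4$ skew matrix using the explicit formulas (\ref{ombase}) and (\ref{etbase}), and check the nine products. (Alternatively, note that $\omega^{\alpha}$ preserves the orientation-compatible almost complex structure while $\eta^{\beta}$ anti-commutes with it in a compatible way, forcing $[\omega^{\alpha},\eta^{\beta}]=0$.) With commutativity in hand, I would also record that each $h^{(\alpha,\beta)}$ is automatically \emph{symmetric}: $(\omega^{\alpha}\eta^{\beta})^T = (\eta^{\beta})^T(\omega^{\alpha})^T = (-\eta^{\beta})(-\omega^{\alpha}) = \eta^{\beta}\omega^{\alpha} = \omega^{\alpha}\eta^{\beta}$.

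Next, for any product I write
\begin{align*}
h^{(\alpha,\beta)} h^{(\gamma,\delta)} = \omega^{\alpha}\eta^{\beta}\omega^{\gamma}\eta^{\delta} = \omega^{\alpha}\omega^{\gamma}\,\eta^{\beta}\eta^{\delta},
\end{align*}
and split into three cases. If $\alpha=\gamma$ and $\beta=\delta$, then (\ref{omsq}) gives $\omega^{\alpha}\omega^{\alpha} = -\mathrm{Id}$ and $\eta^{\beta}\eta^{\beta} = -\mathrm{Id}$, so the product is $\mathrm{Id}$, matching the diagonal. If exactly one of the pairs matches, the product collapses to $\pm\eta^{\sigma}$ or $\pm\omega^{\sigma}$ (a single skew two-form), which is skew-symmetric — this explains every $\ast$ in the table. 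If both pairs differ, the product $\omega^{\alpha}\omega^{\gamma}\eta^{\beta}\eta^{\delta}$ simplifies via (\ref{omquat}) to $\pm\omega^{\sigma}\eta^{\tau} = \pm h^{(\sigma,\tau)}$ for the third indices $\sigma,\tau$; the relative signs are exactly the $+$ for $\omega$-multiplication when going $1\!\to\!2\!\to\!3$ in cyclic order (with $\omega^1\omega^2=-\omega^3$ contributing a minus) combined with the $\eta$-side (with the opposite cyclic convention $\eta^1\eta^2=+\eta^3$).

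The only real work is bookkeeping: a short sign chart for $\omega^{\alpha}\omega^{\gamma}$ and $\eta^{\beta}\eta^{\delta}$ (six entries each), combined multiplicatively, reproduces the six off-diagonal non-$\ast$ entries in each $3\times 3$ block of the table. I expect the main — and only — obstacle to be the commutativity step; past that point every entry of the table is a one-line calculation from the quaternionic relations. After renaming via the dictionary (\ref{htoh}), every stated identity $h^{a}h^{b} = \pm h^{c}$ or $h^{a}h^{a} = \mathrm{Id}$ is verified, completing the proof.
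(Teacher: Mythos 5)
Your proposal is correct and follows the same route the paper implicitly intends. The paper does not give a formal proof of the lemma; it merely remarks that the table is ``easy (but tedious)'' to derive from the quaternionic structure of Proposition~\ref{DerProp}. Your argument supplies exactly the missing content: you make explicit the one structural fact the paper tacitly relies on, namely that every $\omega^{\alpha}$ commutes with every $\eta^{\beta}$ as $4\times 4$ skew endomorphisms (the $\mathfrak{so}(4)\cong\mathfrak{so}(3)_+\oplus\mathfrak{so}(3)_-$ splitting), which is also what makes each $h^{(\alpha,\beta)}=\omega^{\alpha}\eta^{\beta}$ symmetric and hence a legitimate element of $S^2_0$ in the first place. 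Once commutativity is in hand, the factorization $h^{(\alpha,\beta)}h^{(\gamma,\delta)}=\omega^{\alpha}\omega^{\gamma}\,\eta^{\beta}\eta^{\delta}$ together with $(\omega^{\alpha})^2=(\eta^{\beta})^2=-\mathrm{Id}$ and the relations in~(\ref{omquat}) immediately yields all three cases: $\mathrm{Id}$ on the diagonal, a single skew two-form $\pm\omega^{\sigma}$ or $\pm\eta^{\tau}$ (hence the $\ast$ entries) when exactly one index pair coincides, and $\pm h^{(\sigma,\tau)}$ with the stated signs when both pairs differ. Spot-checking, e.g., $h^1h^5=\omega^1\omega^2\,\eta^1\eta^2=(-\omega^3)(\eta^3)=-h^9$ and $h^3h^4=\omega^1\omega^2\,\eta^3\eta^1=(-\omega^3)(\eta^2)=-h^8$, confirms the sign bookkeeping. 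Your proof is complete and is the natural elaboration of the paper's one-sentence justification.
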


As explained in the Introduction, the Weyl tensor can also be interpreted as a symmetric bilinear linear form on the space of trace-free symmetric two-tensors.  If $s,t \in S^2_0(T^{*}X^4)$, then
\begin{align} \label{Ws2}
\hat{W}(s,t) = W_{i k \ell j} s_{k \ell} t_{ij},
\end{align}
where the components are with respect to an orthonormal basis of $T_p M^4$.  We can compute the matrix of $\hat{W}$ with respect to the basis $\{ h^{\alpha} \}_{\alpha = 1}^9$, by using the algebraic properties summarized in Proposition \ref{DerProp} and Lemma \ref{multLemma}:

\begin{proposition} \label{Wmat} The basis in (\ref{hdef}) diagonalizes the Weyl tensor, interpreted as a symmetric bi-linear form as in (\ref{Ws2}). With respect to this basis the matrix of $W$ is given by
\begin{align} \label{Wmatrix}
\hat{W} = \left (\begin{array}{lll}
  \, \mathcal{D}_1 \, & \, 0 \, & \, 0 \, \\
\, 0 \, & \, \mathcal{D}_2 \, & \, 0  \, \\
\, 0 \, & \, 0 \, & \, \mathcal{D}_3 \,
\end{array}
\right),
\end{align}
where the $\mathcal{D}_i$'s are diagonal matrices given by
\begin{align} \label{D1matrix}
\mathcal{D}_i =  \left (\begin{array}{lll}
 \displaystyle -4( \lambda_i + \mu_1)  &  &  \\
 & \displaystyle -4 ( \lambda_i + \mu_2 )    &  \\
  &   &  \displaystyle -4 ( \lambda_i + \mu_3)
\end{array}
\right).
\end{align}
\end{proposition}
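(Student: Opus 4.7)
The plan is to prove the stronger claim that each $h^{(\alpha,\beta)}$ is an eigentensor of $\hat{W}$ (viewed as an endomorphism of $S^2_0(T_p^{*}M^4)$) with eigenvalue $-(\lambda_\alpha + \mu_\beta)$. Since Proposition~\ref{DerProp}$(iv)$ asserts that $\{h^{(\alpha,\beta)}\}_{1 \leq \alpha,\beta \leq 3}$ is an orthogonal basis with $|h^{(\alpha,\beta)}|^2 = 4$, the matrix of $\hat{W}$ as a symmetric bilinear form will then automatically be diagonal, with diagonal entries
\[
\hat{W}(h^{(\alpha,\beta)},h^{(\alpha,\beta)}) \;=\; -(\lambda_\alpha+\mu_\beta)\,\langle h^{(\alpha,\beta)}, h^{(\alpha,\beta)}\rangle \;=\; -4(\lambda_\alpha+\mu_\beta).
\]
Reindexing the nine basis elements as in (\ref{htoh}) --- so that $\alpha$ labels the three outer blocks and $\beta$ labels the three slots inside each $\mathcal{D}_\alpha$ --- then produces exactly the block form (\ref{Wmatrix})--(\ref{D1matrix}).

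To establish the eigenvalue equation, I would identify each $\omega^\alpha$ and $\eta^\beta$ with its associated skew-symmetric endomorphism of $T_pM^4$, so that (\ref{hdef}) literally reads $h^{(\alpha,\beta)} = \omega^\alpha \eta^\beta$ as a matrix product. Substituting (\ref{Wform}) into $\hat{W}(h^{(\alpha,\beta)})_{ij} = W_{ik\ell j}\,h^{(\alpha,\beta)}_{k\ell}$ and collapsing the contractions in $k,\ell$ into matrix multiplications yields
\begin{align*}
\hat{W}(h^{(\alpha,\beta)}) \;=\; \tfrac{1}{2}\sum_a \lambda_a \,\omega^a\omega^\alpha\eta^\beta\omega^a \;+\; \tfrac{1}{2}\sum_b \mu_b \,\eta^b\omega^\alpha\eta^\beta\eta^b.
\end{align*}
Three algebraic inputs then finish the job. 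First, the commutation $\omega^\alpha \eta^\beta = \eta^\beta \omega^\alpha$, which follows from the Lie-algebraic splitting $\mathfrak{so}(4) = \mathfrak{su}(2)_+ \oplus \mathfrak{su}(2)_-$ (and is, in any case, forced by the very fact that $h^{(\alpha,\beta)}$ is symmetric), allows each term to be rewritten as $\omega^a\omega^\alpha\omega^a\eta^\beta$ and $\omega^\alpha\eta^b\eta^\beta\eta^b$ respectively. Second, the quaternion relations (\ref{omsq})--(\ref{omquat}) give $\omega^a\omega^\alpha\omega^a = -\omega^\alpha$ when $a=\alpha$ and $+\omega^\alpha$ when $a\neq\alpha$, with the analogous identity for $\eta$. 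Third, the trace-free conditions (\ref{tfW1}) collapse $\sum_a \lambda_a\, \omega^a\omega^\alpha\omega^a = \bigl(-\lambda_\alpha + \sum_{a\neq\alpha}\lambda_a\bigr)\omega^\alpha = -2\lambda_\alpha\, \omega^\alpha$, and similarly for the anti-self-dual sum. Combining the two contributions gives $\hat{W}(h^{(\alpha,\beta)}) = -(\lambda_\alpha + \mu_\beta)\,h^{(\alpha,\beta)}$.

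The main obstacle --- really the only subtle point --- is the initial reduction of the quadruple sum $W_{ik\ell j}\,h^{(\alpha,\beta)}_{k\ell}$ to a clean matrix product in the $\omega$'s and $\eta$'s, which requires careful bookkeeping with the particular index arrangement $W_{ik\ell j}$ used to define $\hat{W}$ in (\ref{Ws2}) (as opposed to the more standard $W_{ijk\ell}$ appearing in (\ref{Wform})). Once that reduction is carried out correctly, the remainder is purely formal: quaternion algebra together with the vanishing of $\sum_a\lambda_a$ and $\sum_b\mu_b$.
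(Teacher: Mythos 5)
Your proof is correct, and it fills in a computation the paper only gestures at (the paper simply cites Proposition~\ref{DerProp} and Lemma~\ref{multLemma} without writing out the verification). The key steps all check out: the identity $(\hat{W}s)_{ij}=W_{ik\ell j}s_{k\ell}$ does reduce, after substituting (\ref{Wform}), to the matrix product $\tfrac{1}{2}\sum_a\lambda_a\,\omega^a\,(\omega^\alpha\eta^\beta)\,\omega^a + \tfrac{1}{2}\sum_b\mu_b\,\eta^b\,(\omega^\alpha\eta^\beta)\,\eta^b$; the commutation $\omega^\alpha\eta^\beta=\eta^\beta\omega^\alpha$ is indeed forced by $h^{(\alpha,\beta)}$ being symmetric (take the transpose of $\omega^\alpha\eta^\beta$ and use that each factor is skew); the sandwich identity $\omega^a\omega^\alpha\omega^a=\omega^\alpha$ for $a\neq\alpha$ (from anticommutativity plus $(\omega^a)^2=-\mathrm{Id}$) and $=-\omega^\alpha$ for $a=\alpha$, together with $\sum_a\lambda_a=0$, gives $\sum_a\lambda_a\,\omega^a\omega^\alpha\omega^a=-2\lambda_\alpha\omega^\alpha$; and the normalization $|h^{(\alpha,\beta)}|^2=4$ from Proposition~\ref{DerProp}$(iv)$ converts the eigenvalue $-(\lambda_\alpha+\mu_\beta)$ into the diagonal entry $-4(\lambda_\alpha+\mu_\beta)$. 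Your reindexing via (\ref{htoh}) also correctly places $\lambda_\alpha$ as the block label and $\mu_\beta$ as the slot label, matching $\mathcal{D}_i$ exactly. This eigentensor argument is the natural execution of the paper's stated strategy, with the additional merit of exhibiting the full $\mathbb{Z}_2$-graded structure (the self-dual and anti-self-dual sums contribute independently) rather than computing the $81$ entries by brute force.
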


To express the matrix for $\hR$, we use the decomposition of the curvature tensor in four dimensions:
\begin{align} \label{rot1}
R_{i k \ell j} = W_{i k \ell j} + \frac{1}{2} \left( g_{i \ell} E_{k j} - g_{ij} E_{k \ell} - g_{k e\ll} E_{ij} + g_{kj} E_{i \ell} \right) + \frac{1}{12} S \left( g_{i \ell} g_{k j } - g_{ij} E_{k \ell} \right).
\end{align}
If $s$ and $t$ are trace-free symmetric two-tensors, then
\begin{align} \label{RHH} \begin{split}
\hR(s,t) &= R_{i k \ell j} s_{k \ell} t_{ij} \\
&= \hat{W}(s,t) + \hat{E}(s,t) + \frac{1}{12} S \langle s, t \rangle,
\end{split}
\end{align}
where $\langle \cdot , \cdot \rangle$ is the inner product on symmetric two-tensors, and $\hat{E}$ is the bilinear form given by
\begin{align} \label{Edef}
\hat{E}(s,t) = E_{ij} s_{ik} t_{k j} = \langle E , s \, t \rangle,
\end{align}
where $(s \, t)_{ij} = s_{ik} t_{kj}$.  Consequently, to compute the matrix for $\hR$ it only remains to compute the matrix for $\hat{E}$ with respect to the basis $\{ h^{\alpha} \}$.

Since $\{ h^{\alpha} \}$ is a basis for the space of trace-free symmetric two-tensors, we can write
\begin{align} \label{Eexp}
E_{ij} = \frac{1}{4} \epsilon_{\gamma} h^{\gamma}_{ij},
\end{align}
where
\begin{align} \label{epdef}
\epsilon_{\alpha} = \langle E, h^{\alpha} \rangle.
\end{align}
It follows from (\ref{Edef}) that the matrix entry $\hat{E}_{\alpha \beta} = \hat{E}(h^{\alpha}, h^{\beta})$ is given by
\begin{align} \label{Eab} \begin{split}
\hat{E}_{\alpha \beta} &= E_{ij} h_{ik}^{\alpha} h_{kj}^{\beta} \\
&= \frac{1}{4} \epsilon_{\gamma} h^{\gamma}_{ij} h^{\alpha}_{ik} h^{\beta}_{kj} \\
&= \frac{1}{4} \epsilon_{\gamma} \langle h^{\gamma}, h^{\alpha} h^{\beta} \rangle.
\end{split}
\end{align}
Using the product formulas in Lemma \ref{multLemma}, we can therefore express the entries of the matrix $(\hat{E}_{\alpha \beta})$ in terms of the $\epsilon_{\gamma}$'s:

\begin{proposition}  \label{Emat} With respect to the basis in (\ref{hdef}), the matrix of $\hat{E}$ is given by
\begin{align} \label{Ematrix}
\hat{E} = \left (\begin{array}{lll}
  \, 0 \, & \, \mathcal{O}_1 \, & \, \mathcal{O}_2 \, \\
\, - \mathcal{O}_1 \, & \, 0 \, & \, \mathcal{O}_3  \, \\
\, - \mathcal{O}_2 \, & \, - \mathcal{O}_3 \, & \, 0 \,
\end{array}
\right),
\end{align}
where $ \mathcal{O}_1, \mathcal{O}_2, \mathcal{O}_3$ are skew-symmetric $3 \times 3$ matrices given by
\begin{align} \label{Amatrix1}
\mathcal{O}_1 =  \left (\begin{array}{lll}
 0 & -\epsilon_9 & \epsilon_8 \\
\epsilon_9 &  0  & -\epsilon_7  \\
-\epsilon_8 & \epsilon_7 & 0
\end{array}
\right),
\end{align}
\begin{align} \label{Bmatrix1}
\mathcal{O}_2 = \left (\begin{array}{lll}
 0 & \epsilon_6 & -\epsilon_5 \\
-\epsilon_6 &  0  & \epsilon_4  \\
\epsilon_5 & -\epsilon_4 & 0
\end{array}
\right),
\end{align}
\begin{align} \label{Cmatrix1}
\mathcal{O}_3 =  \left (\begin{array}{lll}
 0 & -\epsilon_3 & \epsilon_2 \\
\epsilon_3 &  0  & -\epsilon_1  \\
-\epsilon_2 & \epsilon_1 & 0
\end{array}
\right).
\end{align}
Moreover, these matrices all vanish if and only if $(M^4,g)$ is Einstein.
\end{proposition}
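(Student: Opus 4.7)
The plan is to use the expansion $E = \tfrac{1}{4}\sum_\gamma \epsilon_\gamma h^\gamma$, which is valid because $\{h^\gamma\}$ is an orthogonal basis of $S^2_0$ with $|h^\gamma|^2 = 4$. Substituting this into (\ref{Eab}) reduces the problem to evaluating $\langle h^\gamma,\, h^\alpha h^\beta\rangle$ for each triple $(\alpha,\beta,\gamma)$. The product $h^\alpha h^\beta$ is read off directly from Lemma \ref{multLemma}, and since every $h^\gamma$ is symmetric and trace-free, only the symmetric trace-free part of $h^\alpha h^\beta$ can contribute to the inner product.

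Inspection of the multiplication table shows that every entry $h^\alpha h^\beta$ falls into exactly one of three classes: (a) the identity $\mathrm{Id}$, which appears precisely on the diagonal $\alpha=\beta$ and is pure trace, giving $\langle h^\gamma,\mathrm{Id}\rangle = 0$ for all $\gamma$; (b) a $\ast$ entry, which represents a skew-symmetric matrix orthogonal to every symmetric tensor; or (c) a specific basis element $\pm h^\delta$, in which case $\langle h^\gamma, h^\alpha h^\beta\rangle = \pm 4\,\delta_{\gamma\delta}$ and the contribution to $\hat{E}_{\alpha\beta}$ is $\pm\epsilon_\delta$. From (a) the entire diagonal of $\hat{E}$ vanishes; from (b) the three $3\times 3$ diagonal blocks (indexed within a common triple $\{1,2,3\}$, $\{4,5,6\}$, $\{7,8,9\}$, where the table entries are $\ast$) vanish; and all nonzero entries come from (c). Reading off the six $\pm h^\delta$ entries that appear in each upper off-diagonal $3\times 3$ block of the table then directly assembles the matrices $\mathcal{O}_1$, $\mathcal{O}_2$, $\mathcal{O}_3$ in precisely the forms (\ref{Amatrix1})–(\ref{Cmatrix1}). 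The lower-triangular blocks equal $-\mathcal{O}_i$ either by a symmetric second reading, or, more cleanly, from the symmetry $\hat{E}(s,t) = \hat{E}(t,s)$ (a brief trace-cyclicity argument using that $E$ and the $h^\alpha$ are all symmetric) together with the skew-symmetry of each $\mathcal{O}_i$.

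The final clause is immediate: $\mathcal{O}_1 = \mathcal{O}_2 = \mathcal{O}_3 = 0$ if and only if $\epsilon_\gamma = 0$ for all $\gamma$, if and only if $E \equiv 0$, if and only if $(M^4,g)$ is Einstein. There is no conceptual obstacle in this proof; the one real hurdle is careful bookkeeping of signs in class (c), which is the step where any transcription error would most plausibly occur. Thus I would organize the proof as: set up the expansion and (\ref{Eab}); state the trichotomy (a)–(c); tabulate the six nonzero entries in each of the three upper off-diagonal blocks against $\mathcal{O}_1, \mathcal{O}_2, \mathcal{O}_3$; deduce the lower blocks by symmetry; and conclude with the Einstein characterization.
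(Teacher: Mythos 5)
Your proof is correct and follows essentially the same route as the paper: expand $E = \tfrac14\sum_\gamma \epsilon_\gamma h^\gamma$, reduce each entry $\hat{E}_{\alpha\beta}$ to $\tfrac14\epsilon_\gamma\langle h^\gamma, h^\alpha h^\beta\rangle$, and read the result off the multiplication table of Lemma \ref{multLemma}, with the diagonal blocks vanishing because $(h^\alpha)^2 = \mathrm{Id}$ is pure trace and the remaining within-block products are skew-symmetric hence orthogonal to the symmetric tensors $h^\gamma$ (equivalently to $E$). The paper states this more tersely as ``a straightforward calculation'' and highlights the same two vanishing mechanisms; your trichotomy (a)--(c) is just a cleaner organization of the identical bookkeeping, and your use of the symmetry $\hat{E}(s,t)=\hat{E}(t,s)$ to fill in the lower blocks is sound.
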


\begin{proof}  This is a straightforward calculation, so we only point out some readily observed features.  First, since $(h^{\alpha})^2 = I$, all diagonal entries vanish:
\begin{align*}
\hat{E}(h^{\alpha}, h^{\alpha}) = \langle E , (h^{\alpha})^2 \rangle = \langle E , I \rangle = \mbox{tr }E = 0.
\end{align*}
In fact, if $1 \leq \alpha, \beta \leq 3$ and $\alpha \neq \beta$, then by Lemma \ref{multLemma} the product $h^{\alpha} \, h^{\beta}$ is skew-symmetric, hence
\begin{align*}
\hat{E}(h^{\alpha}, h^{\beta}) = \langle E , h^{\alpha} h^{\beta} \rangle = 0,
\end{align*}
since $E$ is symmetric.  This shows that the upper left $3 \times 3$ block of the matrix vanishes, and a similar argument shows that all three such blocks along the diagonal are zero.

Finally, note that all matrices vanish if and only if $\epsilon_1 = \cdots = \epsilon_9$, which by (\ref{Eexp}) is equivalent to $E = 0$.
\end{proof}

\begin{proof}[Proof of Theorem \ref{Main4}]  Theorem \ref{Main3} follows from Proposition \ref{Wmat}, Proposition \ref{Emat}, and the formula (\ref{RHH}).
\end{proof}

%%%%%%%%%%%%%%%%%%%%%%%%%%%%%%%%%%%%%%%%%%%%%%%%%%%%%%%%%%%%%%
%\section{Dimension four: Einstein manifolds} \label{Sec5}%%%%%%%%
%%%%%%%%%%%%%%%%%%%%%%%%%%%%%%%%%%%%%%%%%%%%%%%%%%%%%%%%%%%%%%%
\section{Einstein Four Manifolds}  \label{Sec5}
In this section we apply our matrix representation of the curvature of second kind to study Einstein manifolds of positive scalar curvature in dimension four, and give the proof to Theorem \ref{Main5}.

For simplicity, let $(M,g)$ be a four-dimensional manifold with $\Rc=g$. Consequently, $\SS=4$. For such a manifold, $\EE\equiv 0$, so the block matrix for $\hat{\RR}$ in  (\ref{R4block}) is diagonal. Using the notation from Proposition \ref{DerProp} and Theorem \ref{Main4}, the eigenvalues of $\hat{\RR}$ are given by 
\[(\frac{1}{3}-\lambda_i-\mu_j).\]

\begin{proof}[Proof of Theorem \ref{Main5}]
	First, with the aid of the ordering of eigenvalues of $\WW$ in (\ref{eigenvalue}), we have
	\begin{align*}
		\lambda_3+\mu_3 &\geq \lambda_3+\mu_2\geq \lambda_3+\mu_1,\\ \lambda_2+\mu_2 &\geq \lambda_2+\mu_1\geq \lambda_1+\mu_1,\\
		\lambda_3+\mu_3 &\geq \lambda_2+\mu_3\geq \lambda_1+\mu_3,\\ \lambda_2+\mu_2 &\geq \lambda_1+\mu_2\geq \lambda_1+\mu_1. 
	\end{align*}
	$\hat{\RR}$ is 5-non-negative if and only if
	\begin{align*}
		0 &\leq \frac{5}{3}-3\lambda_3-3\mu_3-\lambda_2-\lambda_1-\mu_2-\mu_1,\\
		0 &\leq \frac{5}{3}-3\lambda_3-2\mu_3-2\lambda_2-2\mu_2-\mu_1,\\
		0 &\leq \frac{5}{3}-2\lambda_3-3\mu_3-2\lambda_2-2\mu_2-\lambda_1.
	\end{align*}
	Using $\sum_i \lambda_i=\sum_i \mu_i=0$ and Proposition \ref{berger} yields
	\begin{align*}
		0 &\leq \frac{5}{3}-2(\lambda_3+\mu_3)\\
		0&\leq \frac{5}{3}-4a_3\\
		0&\leq \frac{5}{3}-4\WW_{1414}\\
		0&\leq \frac{5}{3}-4(\RR_{1414}-\frac{1}{3})\\
		0&\leq \frac{5}{3}-4(\RR_{1414}-\frac{1}{3})\\
		\RR_{1414} &\leq \frac{3}{4}.
	\end{align*}
	By the ordering (\ref{eigenvalue}), the sectional curvature is bounded above by $\frac{3}{4}$. Using the classification result of \cite[Corollary 1.3]{caotran18} yields the conclusion. 
\end{proof}

When $\2R$ is $6$-non-negative, we have the following observation.
\begin{proposition}	Let $(M,g)$ be a simply connected Einstein four-manifold with positive scalar curvature. If $\hat{\RR}$ is 6-positive then its sectional curvature is bounded above by the Einstein constant. Moreover, the curvature operator (of first kind) is 4-non-negative. 
\end{proposition}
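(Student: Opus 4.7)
The plan is to mirror the proof of Theorem~\ref{Main5}, choosing specific six-element subsets of the nine eigenvalues of $\hat{\RR}|_{S^2_0}$. After normalizing $\Rc = g$ (so $\SS = 4$ and the Einstein constant equals $1$), the Einstein case of Theorem~\ref{Main4} together with Proposition~\ref{berger} identifies these eigenvalues with the numbers $\tfrac{1}{3} - \lambda_i - \mu_j$ for $i, j \in \{1,2,3\}$, where $\{\lambda_i\}$ and $\{\mu_j\}$ are the ordered eigenvalues of $\WW^{\pm}$. Hence six-positivity of $\hat{\RR}$ becomes the assertion that $\sum_{(i,j) \in S}(\lambda_i + \mu_j) < 2$ for every six-element $S \subset \{1,2,3\}^2$.

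First I would bound $\lambda_3 + \mu_3$ from above by applying six-positivity to two subsets that both extend the ``cross'' $\{(3,1),(3,2),(3,3),(2,3),(1,3)\}$ that appeared in the five-non-negative case: one obtained by adjoining the center element $(2,2)$, and the other by adjoining the opposite corner $(1,1)$. A direct computation using $\sum_i \lambda_i = \sum_j \mu_j = 0$ shows that these give, respectively, the strict inequalities $2(\lambda_3+\mu_3) + (\lambda_2+\mu_2) < 2$ and $(\lambda_3+\mu_3) - (\lambda_2+\mu_2) < 2$. Summing eliminates the middle term and yields $\lambda_3 + \mu_3 < \tfrac{4}{3}$. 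By Proposition~\ref{berger} the maximum sectional curvature at each point is $a_3 + \tfrac{\SS}{12} = \tfrac{1}{2}(\lambda_3+\mu_3) + \tfrac{1}{3} < 1$, establishing the first assertion.

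Next I would establish four-non-negativity of $\RR$. On an Einstein four-manifold the decomposition~(\ref{Hodgecuvdecom}) makes $\RR$ block-diagonal with six eigenvalues $\lambda_i + \tfrac{1}{3}$ and $\mu_j + \tfrac{1}{3}$, whose total is $2$. Four-non-negativity is equivalent to the sum of any two of these not exceeding $2$; by the ordering~(\ref{eigenvalue}) the extremal pairs are $(\lambda_3, \mu_3)$, $(\lambda_2, \lambda_3)$, and $(\mu_2, \mu_3)$, so the condition reduces to $\lambda_3+\mu_3 \leq \tfrac{4}{3}$, $-\lambda_1 \leq \tfrac{4}{3}$, and $-\mu_1 \leq \tfrac{4}{3}$. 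The first is in hand. For the other two, I would apply six-positivity to the ``two-row'' subset $\{(i,j) : i \in \{2,3\}\}$ and its column analogue; since one of the index families is then uniform, the corresponding contribution vanishes by $\sum \mu_j = 0$ (respectively $\sum \lambda_i = 0$), and the constraint collapses to $-3\lambda_1 < 2$ (respectively $-3\mu_1 < 2$). Hence $-\lambda_1, -\mu_1 < \tfrac{2}{3}$, well under $\tfrac{4}{3}$.

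The main obstacle is the combinatorial step in the second paragraph: any single six-subset produces a constraint mixing $\lambda_3+\mu_3$ with a middle term like $\lambda_2+\mu_2$ or $\lambda_1+\mu_1$ whose sign is not \emph{a priori} controlled. The technical key is to find two six-subsets whose associated linear forms differ only by a sign in $\lambda_2+\mu_2$, so that summing them isolates $\lambda_3+\mu_3$ with coefficient large enough (namely $3$) to yield the sharp bound $\tfrac{4}{3}$. Once this observation is made, the remaining steps reduce to elementary linear algebra and the orderings from Proposition~\ref{berger}.
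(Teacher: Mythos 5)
Your proposal is correct and follows essentially the same strategy as the paper: pass to the eigenvalue picture $\tfrac{1}{3}-\lambda_i-\mu_j$ via Theorem~\ref{Main4}, apply six-positivity to the ``two-row'' and ``two-column'' subsets to bound $-\lambda_1$ and $-\mu_1$, and reduce the sectional-curvature bound and the first of the three four-non-negativity constraints to $\lambda_3+\mu_3\leq\tfrac{4}{3}$. The only small divergence is in how you isolate $\lambda_3+\mu_3$: you add the constraints from the cross-plus-center and cross-plus-opposite-corner subsets to cancel $\lambda_2+\mu_2$, whereas the paper uses the cross-plus-center subset alone (giving $a_3-a_1\leq 1$) and then invokes the Berger ordering $a_1\leq a_2\leq a_3$ together with $a_1+a_2+a_3=0$ to conclude $a_3\leq\tfrac{2}{3}$; both are valid and essentially equivalent manipulations.
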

\begin{proof}
	Again, we use the normalization $\Rc=g$. $\hat{\RR}$ is 6-non-negative if and only if
	\begin{align*}
		0 &\leq 2-3\lambda_3-3\mu_3-2\lambda_2-\lambda_1-2\mu_2-\mu_1\\
		0 &\leq 2-3\lambda_3-2\mu_3-3\lambda_2-2\mu_2-2\mu_1\\
		0 &\leq 2-2\lambda_3-3\mu_3-2\lambda_2-3\mu_2-2\lambda_1.
	\end{align*} 
	Due to Prop. \ref{berger}, it is equivalent to 
	\begin{align*}
		0 &\leq 2-(\lambda_3+\mu_3)+\lambda_1+\mu_1\\
		&\leq 2-2a_3+2a_1\\%=2-2a_3+2a_1\\
		0 &\leq 2+3\lambda_1\\
		0 &\leq 2+3\mu_1.
	\end{align*} 
	The first inequality is equivalent to 
	\[\RR_{1414}-\RR_{1212}\leq 1.\]
	In combination with the equality 
	\[\RR_{1212}+\RR_{1313}+\RR_{1414}=1,\]
	and the ordering
	\[\RR_{1212}\leq \RR_{1313}\leq \RR_{1414},\]
	we conclude that $\RR_{1414}\leq 1$. 
	
	For the last statement, recall that the eigenvalues of the curvature operator of the first kind are given by 
	\begin{align*}
		\lambda_1+\frac{1}{3} &\leq \lambda_2+\frac{1}{3}\leq \lambda_3+\frac{1}{3};\\
		\mu_1+\frac{1}{3}&\leq \mu_2+\frac{1}{3} \leq \mu_3+\frac{1}{3}.
	\end{align*}
	Thus, $\RR$ is 4-non-negative if and only if 
	\begin{align*}
		0 &\leq \frac{4}{3}-\lambda_3-\mu_3,\\
		0 &\leq \frac{4}{3}+\lambda_1,\\
		0 &\leq \frac{4}{3}+\mu_1. 
	\end{align*}
	
	The first inequality is equivalent to 
	\[\RR_{1414}\leq 1.\]
	The result then follows. 
\end{proof}

\subsection{Examples} \label{Examples}  To illustrate our results, we use Theorem \ref{Main4} to compute the matrix of $\hR$ for some model cases. \medskip

\noindent {\bf 1.}  $(S^4, g_0)$, where $g_0$ is the round metric.  In this case $W = 0$ and $S = 12$ at each point, hence
\begin{align*}
	\hR = 4 \mathbb{I},
\end{align*}
where $\mathbb{I}$ is the identity matrix.  In particular, $\hR$ (as a bilinear form) is positive definite.  \medskip

\noindent {\bf 2.} $(\mathbb{CP}^2, g_{FS})$, where $g_{FS}$ is the Fubini-Study metric.  In this case, $W^{-} \equiv 0$ and $S = 8$. Since the metric is K\"ahler, $W^{+}$ can be diagonalized at each point as
\begin{align} \label{WK}
	W^{+} = \left (\begin{array}{lll}
		\frac{1}{6}S & \ & \ \\
		\ &  \frac{1}{12}S & \  \\
		\ & \ & \frac{1}{12}S
	\end{array}
	\right),
\end{align}
see Proposition 2 of \cite{Derd83}.  Consequently, up to ordering of the eigenvalues, the matrix for $\hR$ is given by
\begin{align} \label{CP2matrix}
	\hat{R} = 16 \left (\begin{array}{lll}
		-\frac{1}{2} \mathbb{I} \, & \, 0 \, & \, 0 \, \\
		\ \ \ 0 \, & \, \mathbb{I} \, & \, 0 \, \\
		\ \ \ 0 \, & \, 0 \, & \, \mathbb{I} \,
	\end{array}
	\right).
\end{align}
Note that the sum of the four smallest eigenvalues is negative, but the sum of the five smallest is positive.  Hence $\hR$ is $5$-positive but not $4$-positive.

\medskip

\noindent {\bf 3.}  $(S^2 \times S^2, g_{p})$, where $g_p$ is the product of the standard metric on each factor.  In this case, $S = 4$, and $g_p$ is K\"ahler with respect to both orientations; i.e., the representation (\ref{WK}) holds for both $W^{+}$ and $W^{-}$.  Consequently, up to ordering of the eigenvalues, the matrix for $\hR$ is given by
\begin{align} \label{S2pmatrix}
	\hat{R} = 4 \left (\begin{array}{lllllllll}
		-1 & \ & \ & \ & \ & \ & \ & \ & \ \\
		\ & 0 & \ & \ & \ & \ & \ & \ & \ \\
		\ & \ & 0 & \ & \ & \ & \ & \ & \ \\
		\ & \ & \ & 0 & \ & \ & \ & \ & \ \\
		\ & \ & \ & \ & 0 & \ & \ & \ & \ \\
		\ & \ & \ & \ & \ & 1 & \ & \ & \ \\
		\ & \ & \ & \ & \ & \ & 1 & \ & \ \\
		\ & \ & \ & \ & \ & \ & \ & 1 & \ \\
		\ & \ & \ & \ & \ & \ & \ & \ & 1 \\
	\end{array}
	\right).
\end{align}
Notice that the sum of the five smallest eigenvalues is negative; i.e., $\hR$ is not five-non-negative.  However, it is six-non-negative.  
%\section{Dimension four: Einstein manifolds} \label{Sec5}
%\input{sec5}%%%%%%%%%%%%%%%%%%%%%%%%%%%%%%%%%%%%
%%%%%%%%%%%%%%%%%%%%%%%%%%%%%%%%%%%%%%%%%%%%%%%%%%%%%%%%%%%%
\bibliographystyle{plain}
\bibliography{bioMorse}

\end{document}